\newcommand{\R}{\mathbb{R}}
\newcommand{\C}{\mathbb{C}}
\newcommand{\Z}{\mathbb{Z}}
\newcommand{\E}{\mathbb{E}}
\newcommand{\calF}{\mathcal{F}}
\newcommand{\calJ}{\mathcal{J}}
\newcommand{\bla}{\big \langle}
\newcommand{\bra}{\big \rangle}
\numberwithin{equation}{section}
\newcommand{\ud}[0]{\,\mathrm{d}}
\newcommand{\esssup}[0]{\operatornamewithlimits{ess\,sup}}
\newcommand{\BMO}[0]{\operatorname{BMO}}
\newcommand{\bmo}[0]{\operatorname{bmo}}
\newcommand{\ch}[0]{\operatorname{ch}}
\newcommand{\calD}[0]{\mathcal{D}}
\newcommand{\wt}[1]{{\widetilde{#1}}}
\theoremstyle{plain}
\newtheorem{thm}[equation]{Theorem}
\newtheorem{lem}[equation]{Lemma}
\newtheorem{prop}[equation]{Proposition}
\theoremstyle{definition}
\theoremstyle{remark}
\newtheorem{rem}[equation]{Remark}
\author{Kangwei Li}
\address[K.L.]{BCAM (Basque Center for Applied Mathematics), Alameda de Mazarredo 14, 48009 Bilbao, Spain}
\email{kli@bcamath.org}
\author{Henri Martikainen}
\address[H.M.]{Department of Mathematics and Statistics, University of Helsinki, P.O.B. 68, FI-00014 University of Helsinki, Finland}
\email{henri.martikainen@helsinki.fi}
\author{Emil Vuorinen}
\address[E.V.]{Department of Mathematics and Statistics, University of Helsinki, P.O.B. 68, FI-00014 University of Helsinki, Finland}
\email{emil.vuorinen@helsinki.fi}
\title[Bi-parameter Bloom type inequality for iterated commutators]{Bloom type inequality for bi-parameter singular integrals: efficient proof and iterated commutators}
\subjclass[2010]{42B20}
\keywords{Representation theorems, iterated commutators, Bloom's inequality}
\begin{document}

\begin{abstract}
Utilising some recent ideas from our bilinear bi-parameter theory, we give an efficient proof of a two-weight Bloom type inequality
for iterated commutators of linear bi-parameter singular integrals. We prove that if $T$ is a bi-parameter singular integral
satisfying the assumptions of the bi-parameter representation theorem, then
$$
\| [b_k,\cdots[b_2, [b_1, T]]\cdots]\|_{L^p(\mu) \to L^p(\lambda)} \lesssim_{[\mu]_{A_p}, [\lambda]_{A_p}} \prod_{i=1}^k\|b_i\|_{\bmo(\nu^{\theta_i})} ,
$$
where $p \in (1,\infty)$, $\theta_i \in [0,1]$, $\sum_{i=1}^k\theta_i=1$, $\mu, \lambda \in A_p$, $\nu := \mu^{1/p}\lambda^{-1/p}$. Here
$A_p$ stands for the bi-parameter weights in $\R^n \times \R^m$ and $\bmo(\nu)$ is a suitable weighted little BMO space.
We also simplify the proof of the known first order case.
\end{abstract}

\maketitle

\section{Introduction}
We recently developed in \cite{LMV} a lot of theory for general bilinear bi-parameter singular integrals using modern dyadic analysis -- in particular,
we proved various bilinear bi-parameter commutator estimates. This lead us to discover an improved general principle for approaching bi-parameter commutator estimates
of dyadic model operators. In this paper we use our method to give an efficient
proof of Bloom type inequalities for iterated commutators of bi-parameter singular integrals.  Our objective is to offer a proof with a very transparent structure.
The iterated result is new in the bi-parameter setting, and its proof benefits greatly from this structure. Our proof of the first order case is short.

With a Bloom type inequality we understand the following. Given some operator $A^b$, the definition of which depends naturally on some function $b$,
we seek for a two-weight estimate
$$
\|A^b\|_{L^p(\mu) \to L^p(\lambda)} \lesssim \|b\|_{\BMO(\nu)},
$$
where $p \in (1,\infty)$, $\mu, \lambda \in A_p$, $\nu := \mu^{1/p}\lambda^{-1/p}$, and $\BMO(\nu)$ is some suitable weighted $\BMO$ space.
Usually $A^b$ is some commutator, like $[b,T]f := bTf - T(bf)$, where $T$ is a singular integral operator. Bloom \cite{Bl} achieved such an inequality
for $T = H$ -- the Hilbert transform. Holmes--Lacey--Wick \cite{HLW, HLW2} gave a modern proof and generalised Bloom's result
to the case of a general (one-parameter) Calder\'on--Zygmund operator. The iterated case is by Holmes--Wick \cite{HW} (see also
Hyt\"onen \cite{Hy4} for a proof via the Cauchy integral trick). An improved iterated case is by Lerner--Ombrosi--Rivera-R\'ios \cite{LOR2}: in \cite{HW, Hy4} there
is some single $b \in \BMO \cap \BMO(\nu)$, while in \cite{LOR2} the iteration is taken using
$b \in \BMO(\nu^{1/k}) \supset \BMO \cap \BMO(\nu)$ (see also the related paper \cite{GHST} by Garc\'ia--Cuerva, Harboure, Segovia and Torrea).
In \cite{LOR2} it is said that it seems that their bound
cannot be obtained by a simple inductive argument.
Some multilinear (one-parameter) Bloom type inequalities are considered by Kunwar--Ou \cite{KO}. Commutator estimates are in general very important
and widely studied -- for some other very recent references see e.g. Hyt\"onen \cite{Hy5} and Lerner--Ombrosi--Rivera-R\'ios \cite{LOR}.

A model of a bi-parameter singular integral operator in $\R^n \times \R^m$ is $T_1 \otimes T_2$, where
$T_1$ and $T_2$ are usual singular integrals in $\R^n$ and $\R^m$, respectively.
The general definition of a bi-parameter singular integral $T$ requires that 
$\langle Tf_1, f_2\rangle$, $f_i = f_i^1 \otimes f_i^2$, can be written using different kernel representations depending on whether
\begin{enumerate}
\item $\operatorname{spt} f_1^1 \cap \operatorname{spt}f_2^1 = \emptyset$ and $\operatorname{spt} f_{1}^2 \cap \operatorname{spt} f_{2}^2 = \emptyset$,
\item $\operatorname{spt} f_1^1 \cap \operatorname{spt}f_2^1 = \emptyset$ or
\item $\operatorname{spt} f_{1}^2 \cap \operatorname{spt}f_{2}^2 = \emptyset$.
\end{enumerate}
In the first case we have a so-called full kernel representation, while in cases $2$ and $3$
a partial kernel representations holds in $\R^n$ or $\R^m$, respectively.
The bi-parameter representation theorem \cite{Ma1} by one of us has enabled the development of deep commutator
estimates also in the bi-parameter setting. The representation holds under natural $T1$ conditions involving
the product $\BMO$ space of Chang and Fefferman \cite{CF1} and some weak testing conditions. It allows
to reduce the commutator estimates of singular integrals
to those of model operators $U$, where $U$ is a so-called bi-parameter shift, partial paraproduct or full paraproduct.
We will only need these model operators in this paper and they are recalled in Section \ref{sec:firstorder}.
As the somewhat lengthy kernel estimates and testing conditions of $T$ are not explicitly needed here, we refer to \cite{Ma1} for the remaining details.

Using the dyadic representation theorem Ou, Petermichl and Strouse proved in \cite{OPS} that $[b,T] \colon L^2(\R^{n+m}) \to L^2(\R^{n+m})$, when $T$ is a paraproduct free bi-parameter singular integral and $b$ is a little BMO function. This is the important base case for more complicated multi-parameter commutator estimates involving product BMO and iterated commutators of the form $[T_1, [b, T_2]]$ -- see again \cite{OPS} and Dalenc--Ou \cite{DaO}.
For the earlier deep commutator lower bounds in the Hilbert and Riesz settings see Ferguson--Lacey \cite{FL} and Lacey--Petermichl--Pipher--Wick \cite{LPPW}. The paper \cite{OPS} was eventually generalised to concern all bi-parameter singular integrals satisfying $T1$ conditions by Holmes--Petermichl--Wick \cite{HPW}. In fact, \cite{HPW} proves much more: Bloom's inequality in the bi-parameter setting.
The multi-parameter commutator scene is again very active, see also e.g. Duong--Li--Ou--Pipher--Wick \cite{DLOPW}, which
is a very recent paper concerning commutators of multi-parameter flag singular integrals.

In \cite{LMV} we explain that the presence of non-cancellative Haar functions $h_I^0$ in many of the bi-parameter model operators seem to have caused
a lot of technical troubles in previous bi-parameter commutator estimates.
Our guideline is to expand $bf$ using bi-parameter martingales in $\langle bf, h_{I} \otimes h_J\rangle$, using one-parameter martingales in
$\langle bf, h_{I}^0 \otimes h_J\rangle$ (or $\langle bf, h_I \otimes h_J^0\rangle$), and not to expand at all in $\langle bf, h_{I}^0 \otimes h_J^0\rangle$.
Moreover, when a non-cancellative Haar function appears a suitable average of $b$ is added and subtracted.
See Section \ref{sec:marprod} for the general details and e.g. \eqref{eq:partial} for an example of the resulting simple decomposition.
In \cite{HPW}
everything was always reduced to a so called remainder term, which essentially entails expanding $bf$ in the bi-parameter sense
in all of the above situations. However, this remainder term has a particularly nice structure only when there are no non-cancellative Haar functions (the shift case) -- otherwise
it can lead to some difficult tail terms.

In this paper we want to use the above decomposition idea from \cite{LMV} and showcase how it simplifies things in the linear bi-parameter setting. The Bloom setting is demanding, but the proof framework adapts nicely even to this generality. Our treatment of first order commutators is very different in many ways compared to \cite{HPW} -- that is, the simplifications in the decomposition itself, which are described above, are not the only difference -- we also estimate differently. We exploit the known one-weight boundedness of the model operators even more: most terms
arising from our new decomposition can be estimated directly by combining the weighted boundedness of the model operators and some Bloom type estimates of appropriate auxiliary operators, such as,
$$
f \mapsto \sup_R \frac {1_R}{|R|}\int_R |b-\langle b \rangle_R||f|,
$$
where $R = I \times J$ is a dyadic rectangle. The bounds for this maximal function presented in Proposition \ref{prop:bloomforMb}
rely on an interesting estimate of Fefferman \cite{Fe3} concerning the maximal function $f \mapsto \sup_R 1_R  \langle |f| \rangle_R^{\lambda}$,
$\langle f \rangle_R^{\lambda} := \lambda(R)^{-1} \int_R f\ud \lambda$, defined using an $A_{\infty}$-weight $\lambda$
(see also Appendix \ref{app2}).
Modern sparse domination methods are also useful in some parts
of the proof -- we use such estimates from \cite{LOR} and \cite{LMOV, LMV}. For example, a certain special term $U^b$ associated to a model operator $U$ and the commuting function $b$ needs to be estimated directly. The estimate \eqref{eq:ApplySparse} that follows from
sparse domination techniques is very effective for this.

We also can, for the first time, prove a Bloom type inequality for iterated commutators of bi-parameter singular integrals.
Our main theorem is:
\begin{thm}\label{thm:main}
Let $T$ be a bi-parameter singular integral satisfying the $T1$ type assumptions of the dyadic representation theorem \cite{Ma1}.
Let also $p \in (1,\infty)$, $\mu, \lambda \in A_p$ and $\nu := \mu^{1/p}\lambda^{-1/p}$. Then we have
$$
\| [b, T]\|_{L^p(\mu) \to L^p(\lambda)} \lesssim_{[\mu]_{A_p}, [\lambda]_{A_p}} \|b\|_{\bmo(\nu)}
$$
and, more generally,
$$
\| [b_k,\cdots[b_2, [b_1, T]]\cdots]\|_{L^p(\mu) \to L^p(\lambda)} \lesssim_{[\mu]_{A_p}, [\lambda]_{A_p}} \prod_{i=1}^k\|b_i\|_{\bmo(\nu^{\theta_i})},
$$
where $\theta_i \in [0,1]$ and $\sum_{i=1}^k \theta_i=1$. Here
$A_p$ stands for the bi-parameter weights in $\R^n \times \R^m$ and $\bmo(\nu)$ is a suitable weighted little BMO space.
\end{thm}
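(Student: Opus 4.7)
The plan is to invoke the bi-parameter dyadic representation theorem of \cite{Ma1}, which writes $T$ as a rapidly convergent average of model operators $U$ (bi-parameter shifts, partial paraproducts, and full paraproducts). This reduces Theorem \ref{thm:main} to a uniform Bloom-type bound for iterated commutators with each fixed $U$, up to polynomial growth in the complexities of $U$. For the first-order commutator $[b, U]$, I would apply the martingale decomposition strategy from \cite{LMV}: expand $bf$ in full bi-parameter Haar series against the cancellative pieces $h_I \otimes h_J$ of $U$, expand $bf$ in a one-parameter Haar series only in the cancellative direction against mixed pieces $h_I^0 \otimes h_J$ or $h_I \otimes h_J^0$ (while adding and subtracting the appropriate one-parameter average of $b$ to absorb the non-cancellative side), and not expand $bf$ at all against fully non-cancellative pieces $h_I^0 \otimes h_J^0$. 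Every resulting term is then either a composition of a sub-model-operator of $U$ (to which the one-weight $A_p$ theory applies on both $L^p(\mu)$ and $L^p(\lambda)$) with a $b$-paraproduct or with the auxiliary maximal operator $M_b$ from the introduction, both of which enjoy the Bloom-type bound of Proposition \ref{prop:bloomforMb}; or the intrinsically two-weight term $U^b$ attached to a full paraproduct, handled by the sparse-domination estimate \eqref{eq:ApplySparse}.

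For the iterated commutator I would iterate the same decomposition $k$ times, applying it once per $b_i$ in order. Introduce intermediate $A_p$ weights $\mu_i := \mu^{1-\Theta_i}\lambda^{\Theta_i}$ with $\Theta_i = \sum_{j \le i}\theta_j$, so that $\mu_0 = \mu$, $\mu_k = \lambda$, and $\mu_{i-1}^{1/p}\mu_i^{-1/p} = \nu^{\theta_i}$ for every $i$; these weights lie in $A_p$ with constants controlled by $[\mu]_{A_p}$ and $[\lambda]_{A_p}$. At step $i$, bound the new auxiliary operator attached to $b_i$ by its Bloom estimate from $L^p(\mu_{i-1})$ to $L^p(\mu_i)$, contributing a factor $\|b_i\|_{\bmo(\nu^{\theta_i})}$. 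After $k$ such steps the leftover sub-model-operator of $U$ acts on a fixed $L^p(\mu_j)$ and is bounded by the one-weight theory. The constants telescope to the product $\prod_i \|b_i\|_{\bmo(\nu^{\theta_i})}$, and summing over $U$ via the representation closes the proof. I would deliberately avoid a naive induction on $k$, since as observed in \cite{LOR2} the first-order Bloom bound alone does not sustain such an induction in the desired form.

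The main obstacle is the combinatorial and structural bookkeeping in the iterated expansion. Each of the three model-operator types decomposes differently under each application of $[b_i, \cdot]$, and one must verify that every surviving term is a composition of operators each bounded on the appropriate intermediate weighted space, with all $b_i$ factors absorbed by either Proposition \ref{prop:bloomforMb} or \eqref{eq:ApplySparse}. The full paraproduct generates the genuinely two-weight pieces $U^{b_i}$ for which one-weight methods fail and the sparse input is indispensable, while the partial paraproducts produce the one-direction non-cancellative terms where the modified one-parameter expansion of \cite{LMV} is essential. Carrying these two features through $k$ nested decompositions without losing the telescoping structure on the chain $\mu_0, \ldots, \mu_k$ is the delicate part, and it is precisely where the transparent decomposition from \cite{LMV} pays off compared to the remainder-term approach of \cite{HPW}.
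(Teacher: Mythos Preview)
Your high-level strategy---reduce via the representation theorem to model operators, apply the \cite{LMV} martingale expansion to $bf$ according to the cancellative structure of the Haar pieces, and telescope through intermediate weights $\mu_i=\mu^{1-\Theta_i}\lambda^{\Theta_i}$---matches the paper's. But two concrete points are off.

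First, you have misassigned the roles of the model operators in the first-order case. The term $U^b$ \emph{vanishes} for full paraproducts (since $k=v=0$ there); it is nontrivial only for shifts and partial paraproducts. For shifts $U^b$ is handled by an elementary argument using \eqref{eq:av-av-split}--\eqref{eq:av-av}, not by sparse domination. The sparse estimate \eqref{eq:ApplySparse} is what handles $U^b$ in the \emph{partial paraproduct} case. The maximal function of Proposition~\ref{prop:bloomforMb} is used only for the full paraproduct, to control the un-expanded term $\langle (b-\langle b\rangle_{K\times V})f_1\rangle_{K\times V}$. You have also omitted the auxiliary operator $\varphi^{\nu,1}_{\calD^n,\calD^m}$ of Lemma~\ref{lem:PhiNuLemma}, which is needed for the partial paraproduct's mixed term (last line of \eqref{eq:partial}) and is not captured by either $M_b$ or the paraproducts $A_i$, $a_j^1$.

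Second, and more seriously, your iterated scheme assumes that after $k$ expansions every term is a \emph{composition} of first-order Bloom operators, so that the intermediate weights telescope. This is not the case. Each iteration also produces genuinely new ``product'' terms that do not factor: for shifts and partial paraproducts one gets $U^{b_1,\ldots,b_k}$ carrying the product $\prod_i(\langle b_i\rangle_{I_2\times J_2}-\langle b_i\rangle_{I_1\times J_1})$, for full paraproducts one gets $\langle \prod_i(b_i-\langle b_i\rangle_{K\times V})\,f_1\rangle_{K\times V}$, and the partial paraproduct additionally spawns terms like $III$ in Section~5.2 involving $\prod_i(\langle b_i\rangle_{I_1,1}-\langle b_i\rangle_{I_1\times V})$. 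These require direct two-weight bounds proved from scratch (the paper introduces $M^{b_1,b_2}_{\calD^n,\calD^m}$, $\varphi^{\nu^{\theta_1},\nu^{\theta_2},1}_{\calD^n,\calD^m}$, and a direct estimate of $U^{b_1,b_2}$ for this purpose), and they are precisely where the choice of exponents $\theta_i$ with $\sum_i\theta_i=1$ enters in a non-telescoping way. Your plan as written would miss these terms.
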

\begin{rem}
In the proof of the iterated commutator estimate, we will only prove the second order case, since the proof structure is such that it is clear
how to continue the iteration.

Notice also that choosing $b_1 = \cdots = b_k = b$ and $\theta_1 = \cdots = \theta_k = 1/k$ we get a bi-parameter analog of \cite{LOR2}, while choosing $\theta_1 = 1$ (and the rest zero) we get analogs of \cite{HW, Hy4}. However, the first is the better choice as $\bmo(\nu^{1/k}) \supset \bmo \cap \bmo(\nu)$. Indeed, similarly as in the one-parameter case
\cite{LOR2}, this is seen by using that $\langle \nu \rangle_{R}^{\theta} \lesssim_{[\nu]_{A_2}} \langle \nu^{\theta} \rangle_{R}$ for all $\theta \in (0,1)$ and rectangles $R$ (this estimate follows from Theorem 2.1 in \cite{CN} by iteration).
\end{rem}
We also mention that some experts may find Appendix \ref{app1} interesting: it proves that little BMO is contained in the product BMO -- even in the weighted situation --
using only relatively elementary tools. We confess that we were only previously aware of a proof of this in the unweighted situation, and that proof depended
on the deep commutator result of Ferguson--Lacey \cite{FL}. This weighted result is mentioned in \cite{HPW} without proof.

\section{Definitions and preliminaries}
\subsection{Basic notation}
We denote $A \lesssim B$ if $A \le CB$ for some constant $C$ that can depend on the dimension of the underlying spaces, on integration exponents, and on various other constants appearing in the assumptions. We denote $A \sim B$ if $B \lesssim A \lesssim B$.

We work in the bi-parameter setting in the product space $\R^{n+m}$.
In such a context $x = (x_1, x_2)$ with $x_1 \in \R^n$ and $x_2 \in \R^m$.
We often take integral pairings with respect to one of the two variables only:
If $f \colon \R^{n+m} \to \C$ and $h \colon \R^n \to \C$, then $\langle f, h \rangle_1 \colon \R^{m} \to \C$ is defined by
$$
\langle f, h \rangle_1(x_2) = \int_{\R^n} f(y_1, x_2)h(y_1)\ud y_1.
$$

\subsection{Dyadic notation, Haar functions and martingale differences}
We denote a dyadic grid in $\R^n$ by $\calD^n$ and a dyadic grid in $\R^m$ by $\calD^m$. If $I \in \calD^n$, then $I^{(k)}$ denotes the unique dyadic cube $S \in \calD^n$ so that $I \subset S$ and $\ell(S) = 2^k\ell(I)$. Here $\ell(I)$ stands for side length. Also, $\text{ch}(I)$ denotes the dyadic children of $I$, i.e., $I' \in \ch(I)$ if $I' \in \calD^n$, $I' \subset I$ and $\ell(I') = \ell(I)/2$. We sometimes write
$\calD = \calD^n \times \calD^m$.

When $I \in \calD^n$ we denote by $h_I$ a cancellative $L^2$ normalised Haar function. This means the following.
Writing $I = I_1 \times \cdots \times I_n$ we can define the Haar function $h_I^{\eta}$, $\eta = (\eta_1, \ldots, \eta_n) \in \{0,1\}^n$, by setting
\begin{displaymath}
h_I^{\eta} = h_{I_1}^{\eta_1} \otimes \cdots \otimes h_{I_n}^{\eta_n}, 
\end{displaymath}
where $h_{I_i}^0 = |I_i|^{-1/2}1_{I_i}$ and $h_{I_i}^1 = |I_i|^{-1/2}(1_{I_{i, l}} - 1_{I_{i, r}})$ for every $i = 1, \ldots, n$. Here $I_{i,l}$ and $I_{i,r}$ are the left and right
halves of the interval $I_i$ respectively. The reader should carefully notice that $h_I^0$ is the non-cancellative Haar function for us and that
in some other papers a different convention is used.
If $\eta \in \{0,1\}^n \setminus \{0\}$ the Haar function is cancellative: $\int h_I^{\eta} = 0$. We usually suppress the presence of $\eta$
and simply write $h_I$ for some $h_I^{\eta}$, $\eta \in \{0,1\}^n \setminus \{0\}$. Then $h_Ih_I$ can stand for $h_I^{\eta_1} h_I^{\eta_2}$, but we always treat
such a product as a non-cancellative function (which it is in the worst case scenario $\eta_1 = \eta_2$).

For $I \in \calD^n$ and a locally integrable function $f\colon \R^n \to \C$, we define the martingale difference
$$
\Delta_I f = \sum_{I' \in \textup{ch}(I)} \big[ \bla f \bra_{I'} -  \bla f \bra_{I} \big] 1_{I'}.
$$
Here $\bla f \bra_I = \frac{1}{|I|} \int_I f$. We also write $E_I f = \bla f \bra_I 1_I$.
Now, we have $\Delta_I f = \sum_{\eta \ne 0} \langle f, h_{I}^{\eta}\rangle h_{I}^{\eta}$, or suppressing the $\eta$ summation, $\Delta_I f = \langle f, h_I \rangle h_I$, where $\langle f, h_I \rangle = \int f h_I$. A martingale block is defined by
$$
\Delta_{K,i} f = \mathop{\sum_{I \in \calD^n}}_{I^{(i)} = K} \Delta_I f, \qquad K \in \calD^n.
$$

Next, we define bi-parameter martingale differences. Let $f \colon \R^n \times \R^m \to \C$ be locally integrable.
Let $I \in \calD^n$ and $J \in \calD^m$. We define the martingale difference
$$
\Delta_I^1 f \colon \R^{n+m} \to \C, \Delta_I^1 f(x) := \Delta_I (f(\cdot, x_2))(x_1).
$$
Define $\Delta_J^2f$ analogously, and also define $E_I^1$ and $E_J^2$ similarly.
We set
$$
\Delta_{I \times J} f \colon \R^{n+m} \to \C, \Delta_{I \times J} f(x) = \Delta_I^1(\Delta_J^2 f)(x) = \Delta_J^2 ( \Delta_I^1 f)(x).
$$
Notice that $\Delta^1_I f = h_I \otimes \langle f , h_I \rangle_1$, $\Delta^2_J f = \langle f, h_J \rangle_2 \otimes h_J$ and
$ \Delta_{I \times J} f = \langle f, h_I \otimes h_J\rangle h_I \otimes h_J$ (suppressing the finite $\eta$ summations).
Martingale blocks are defined in the natural way
$$
\Delta_{K \times V}^{i, j} f  =  \sum_{I\colon I^{(i)} = K} \sum_{J\colon J^{(j)} = V} \Delta_{I \times J} f = \Delta_{K,i}^1( \Delta_{V,j}^2 f) = \Delta_{V,j}^2 ( \Delta_{K,i}^1 f).
$$

\subsection{Weights}
A weight $w(x_1, x_2)$ (i.e. a locally integrable a.e. positive function) belongs to bi-parameter $A_p(\R^n \times \R^m)$, $1 < p < \infty$, if
$$
[w]_{A_p(\R^n \times \R^m)} := \sup_{R} \bla w \bra_R \bla w^{1-p'} \bra_R^{p-1} < \infty,
$$
where the supremum is taken over $R = I \times J$, where $I \subset \R^n$ and $J \subset \R^m$ are cubes
with sides parallel to the axes (we simply call such $R$ rectangles). Here $1/p + 1/p' = 1$, i.e., $p'$ is the dual exponent of $p$.
We have
$$
[w]_{A_p(\R^n\times \R^m)} < \infty \textup { iff } \max\big( \esssup_{x_1 \in \R^n} \,[w(x_1, \cdot)]_{A_p(\R^m)}, \esssup_{x_2 \in \R^m}\, [w(\cdot, x_2)]_{A_p(\R^n)} \big) < \infty,
$$
and that $\max\big( \esssup_{x_1 \in \R^n} \,[w(x_1, \cdot)]_{A_p(\R^m)}, \esssup_{x_2 \in \R^m}\, [w(\cdot, x_2)]_{A_p(\R^n)} \big) \le [w]_{A_p(\R^n\times \R^m)}$, while
the constant $[w]_{A_p}$ is dominated by the maximum to some power.
Of course, $A_p(\R^n)$ is defined similarly as $A_p(\R^n \times \R^m)$ -- just take the supremum over cubes $Q$. For the basic theory
of bi-parameter weights consult e.g. \cite{HPW}.

Also, recall that $w \in A_{\infty}(\R^n)$ if
$$
[w]_{A_{\infty}(\R^n)} = \sup_Q \Big( \frac{1}{|Q|} \int_Q w \Big) \operatorname{exp}\Big( \frac{1}{|Q|} \int_Q \log w^{-1} \Big) < \infty,
$$
where the supremum is taken over all the cubes $Q \subset \R^n$. We will use that $A_p \subset A_{\infty}$, and also some estimates that are valid for $A_{\infty}$ weights. 

\subsection{Maximal functions and standard estimates}
Given $f \colon \R^{n+m} \to \C$ and $g \colon \R^n \to \C$ we denote the dyadic maximal functions
by
$$
M_{\calD^n}g(x) := \sup_{I \in \calD^n} \frac{1_I(x)}{|I|}\int_I |g(y)| \ud y
$$
and
$$
M_{\calD^n, \calD^m} f(x_1, x_2) := \sup_{R \in \calD^n \times \calD^m}  \frac{1_R(x_1, x_2)}{|R|}\iint_R |f(y_1, y_2)|\ud y_1 \ud y_2.
$$
We also set $M^1_{\calD^n} f(x_1, x_2) =  M_{\calD^n}(f(\cdot, x_2))(x_1)$. The operator $M^2_{\calD^m}$ is defined similarly.
We record the following standard estimates, which are used repeatedly below. 
\begin{lem}\label{lem:standardEst}
For $p \in (1,\infty)$ and $w \in A_p(\R^n \times \R^m)$ the weighted square function estimates
\begin{align*}
\| f \|_{L^p(w)}
& \sim_{[w]_{A_p(\R^n \times \R^m)}} \Big\| \Big( \mathop{\sum_{I \in \calD^n}}_{J \in \calD^m} |\Delta_{I \times J} f|^2 \Big)^{1/2} \Big\|_{L^p(w)} \\
&\sim_{[w]_{A_p(\R^n \times \R^m)}}  \Big\| \Big(  \sum_{I \in \calD^n} |\Delta_I^1 f|^2 \Big)^{1/2} \Big\|_{L^p(w)}
\sim_{[w]_{A_p(\R^n \times \R^m)}} \Big\| \Big(  \sum_{J \in \calD^m} |\Delta_J^2 f|^2 \Big)^{1/2} \Big\|_{L^p(w)}
\end{align*}
hold. Moreover, for $p, s \in (1,\infty)$ we have the Fefferman--Stein inequality
$$
\Big\| \Big( \sum_j |M f_j |^s \Big)^{1/s} \Big\|_{L^p(w)} \le C([w]_{A_p}) \Big\| \Big( \sum_{j} | f_j |^s \Big)^{1/s} \Big\|_{L^p(w)}.
$$
Here $M$ can e.g. be $M_{\calD^n}$, $M_{\calD^n}^1$ or $M_{\calD^n, \calD^m}$. Finally, we have
$$
\| \varphi_{\calD^n, \calD^m}^1 f\|_{L^p(w)} 
\sim_{[w]_{A_p}}
\Big\| \Big( \sum_{I \in \calD^n} \frac{1_I}{|I|} \otimes [M_{\calD^m} \langle f, h_I \rangle_1]^2 \Big)^{1/2} \Big\|_{L^p(w)} 
\lesssim_{[w]_{A_p}} \|f\|_{L^p(w)},
$$
where
$$
\varphi_{\calD^n, \calD^m}^1 f  := \sum_{I \in \calD^n} h_I \otimes M_{\calD^m} \langle f, h_I \rangle_1.
$$
The function $\varphi_{\calD^n, \calD^m}^2 f$ is defined in the symmetric way and satisfies the same estimates.
\end{lem}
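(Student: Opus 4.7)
The plan is to reduce everything to the one-parameter weighted theory via the slice characterization of bi-parameter $A_p$, namely that $w \in A_p(\R^n \times \R^m)$ implies $w(\cdot,x_2) \in A_p(\R^n)$ uniformly in $x_2$ (and symmetrically), which was recalled just before the lemma. All constants that appear only depend on $[w]_{A_p(\R^n\times \R^m)}$ through this reduction.

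\emph{One-parameter identities in one variable.} For the equivalence $\|f\|_{L^p(w)} \sim \|(\sum_I |\Delta_I^1 f|^2)^{1/2}\|_{L^p(w)}$ I would write $\|f\|_{L^p(w)}^p = \int_{\R^m} \|f(\cdot, x_2)\|_{L^p(w(\cdot,x_2))}^p \ud x_2$ by Fubini and apply the classical one-parameter weighted square function estimate (a consequence of Burkholder's inequality and $A_p$-boundedness of martingale transforms) slice by slice, with constants uniform in $x_2$. The $\Delta_J^2$ version is symmetric. For the bi-parameter version, iterate: use the $\Delta_I^1$ equivalence applied to the vector-valued function $(\Delta_J^2 f)_J$, and combine with a vector-valued one-parameter square function in the $J$ variable; alternatively just apply the $x_1$-slice equivalence first and then, for each fixed realization of the Rademacher (or directly on the square function), apply the $\Delta_J^2$ equivalence in $x_2$.

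\emph{Fefferman--Stein.} For $M = M_{\calD^n}$ this is the standard Fefferman--Stein vector-valued estimate, which follows from Rubio de Francia extrapolation from the scalar $A_p$-boundedness of the dyadic maximal function, applied slice-wise and integrated as above. For $M = M_{\calD^n}^1$ use the same slice argument directly. For $M = M_{\calD^n, \calD^m}$ use $M_{\calD^n,\calD^m} \le M_{\calD^n}^1 M_{\calD^m}^2$ and iterate the one-parameter vector-valued estimates (the second application is valid because $M_{\calD^n}^1$ preserves the $A_p$-class in the remaining variable in the required uniform sense).

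\emph{The operator $\varphi_{\calD^n,\calD^m}^1$.} Since each summand $h_I \otimes M_{\calD^m}\langle f,h_I\rangle_1$ is cancellative in $x_1$ at scale $I$, the first equivalence is just the $x_1$-square function identity applied to this function:
$$
\|\varphi_{\calD^n,\calD^m}^1 f\|_{L^p(w)} \sim \Bigl\| \Bigl( \sum_I |h_I|^2 \otimes [M_{\calD^m}\langle f,h_I\rangle_1]^2\Bigr)^{1/2}\Bigr\|_{L^p(w)},
$$
and $|h_I|^2 = 1_I/|I|$ gives the stated form. Then apply the vector-valued Fefferman--Stein inequality (in the $x_2$ variable, with exponent $s=2$) to pull $M_{\calD^m}$ off, and then unravel $1_I/|I| \otimes |\langle f,h_I\rangle_1|^2 = |\Delta_I^1 f|^2$, and close using the one-parameter square function equivalence proved in the first step.

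The only mildly delicate point is the correct handling of vector-valued / iterated extrapolation in the bi-parameter weighted setting; but once the uniform slice-wise $A_p$ property is invoked, each ingredient (Burkholder's square function, Fefferman--Stein, scalar $A_p$-maximal bound) is a well-known one-parameter fact applied in each variable in turn.
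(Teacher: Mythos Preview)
Your proposal is correct and essentially fills in all the details the paper leaves implicit. The paper's own ``proof'' of this lemma is only a two-line sketch: reduce to $p=2$ by Rubio de Francia extrapolation, and at $p=2$ iterate the one-parameter weighted square function estimates using Fubini (which is particularly clean there because the $\ell^2$-sum and the $L^2$-norm commute, so no vector-valued or randomized ingredient is needed). You instead work directly at every $p$ by slicing, using the uniform slice-wise $A_p$ property and either a vector-valued one-parameter square function bound or the Kahane--Khintchine randomization trick to handle the bi-parameter square function. Both routes are standard and equivalent in practice; the paper's extrapolation route keeps every one-parameter application scalar-valued at the cost of invoking the extrapolation machinery, while your route avoids extrapolation at the cost of a mild vector-valued/randomized step. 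Your parenthetical remark that ``$M_{\calD^n}^1$ preserves the $A_p$-class in the remaining variable'' is unnecessary: the iteration for $M_{\calD^n,\calD^m} \le M_{\calD^n}^1 M_{\calD^m}^2$ goes through just by applying the one-parameter Fefferman--Stein bound slice-wise twice, in the two orders.
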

One easy way to show such estimates is to reduce to $p=2$ via standard extrapolation. When $p=2$ it is especially easy to
use one-parameter results iteratively. See e.g. \cite{CMP, CWW} for one-parameter square function results and their history.

If an average is with respect to a different measure than the Lebesgue measure we can e.g. write
$\langle f \rangle_R^{\lambda} := \frac{1}{\lambda(R)} \int_R f\ud \lambda$, and similarly
we can write $M_{\calD^n, \calD^m, \lambda}f = \sup_R 1_R  \langle |f| \rangle_R^{\lambda}$.

\subsection{BMO spaces}\label{ss:bmo}
Given $w \in A_2(\R^n)$ we say that a locally integrable function $b \colon \R^n \to \C$ belongs to the weighted dyadic BMO space $\BMO_{\calD^n}(w)$ if
$$
\|b\|_{\BMO_{\calD^n}(w)} := \sup_{I \in \calD^n} \frac{1}{w(I)} \int_I |b - \langle b \rangle_I| < \infty.
$$
The space $\BMO(w)$ can be defined using the norm defined by the supremum over all dyadic grids of
the above dyadic norms.

Given $w \in A_2(\R^n \times \R^m)$ we say that a locally integrable function $b \colon \R^{n+m} \to \C$ belongs to the weighted dyadic little BMO space $\bmo_{\calD^n, \calD^m}(w)$ if
$$
\|b\|_{\bmo_{\calD^n, \calD^m}(w)} := \sup_{R \in \calD^n \times \calD^m} \frac{1}{w(R)} \int_R |b - \langle b \rangle_R| < \infty.
$$
Again, the space $\bmo(w)$ is defined via the supremum of the dyadic norms. We have
$$
\|b\|_{\bmo_{\calD^n, \calD^m}(w)} \sim \max\big( \esssup_{x_1 \in \R^n} \, \|b(x_1, \cdot)\|_{\BMO_{\calD^m}(w(x_1, \cdot))}, \esssup_{x_2 \in \R^m}\, \|b(\cdot, x_2)\|_{\BMO_{\calD^n}(w(\cdot, x_2))} \big).
$$
Moreover, we have the two-weight John--Nirenberg property
\begin{equation}\label{eq:2JN}
\|b\|_{\bmo_{\calD^n, \calD^m}(\nu)} \sim_{[\mu]_{A_p}, [\lambda]_{A_p}} \sup_{R \in \calD^n \times \calD^m} \Big( \frac{1}{\mu(R)} \int_{R} |b - \langle b \rangle_{R}|^p \lambda \Big)^{1/p},
\end{equation}
if $p \in (1, \infty)$, $\mu, \lambda \in A_p$ and $\nu := \mu^{1/p}\lambda^{-1/p}$. Notice that here $\nu \in A_2$. For these see \cite{HPW}.

Finally, we have the product BMO space. Given $w \in A_2(\R^n \times \R^m)$ set
$$
\|b\|_{\BMO_{\textup{prod}}^{\calD^n, \calD^m}(w)} := 
\sup_{\Omega} \Big( \frac{1}{w(\Omega)} \mathop{\sum_{I \in \calD^n, J \in \calD^m}}_{I \times J \subset \Omega} |\langle b, h_I \otimes h_J\rangle|^2 \langle w \rangle_{I \times J}^{-1} \Big)^{1/2},
$$
where the supremum is taken over those sets $\Omega \subset \R^{n+m}$ such that $|\Omega| < \infty$ and such that for every $x \in \Omega$ there exist
$I \in \calD^n, J \in \calD^m$ so that $x \in I \times J \subset \Omega$.
The non-dyadic product BMO space can be defined using the norm defined by the supremum over all dyadic grids of
the above dyadic norms. 

It is stated in \cite{HPW} (without proof or reference) that $\bmo(w) \subset \BMO_{\textup{prod}}(w)$, $w \in A_2$. This embedding $\|b\|_{\BMO_{\textup{prod}}^{\calD^n, \calD^m}(w)} \lesssim_{[w]_{A_2}} \|b\|_{\bmo_{\calD^n, \calD^m}(w)}$ is used in the main proof only via the fact that it implies that
\eqref{eq:WeightedH1BMO} also holds for $\bmo(w)$ functions. We give a proof of this result in Appendix \ref{app1}.

\subsection{Commutators}
We briefly discuss one way to understand how the commutators are defined, and how all the pairings and expansions appearing in our proof can be seen to be well defined.
For example, we discuss the second order case. Let $b_i \in \bmo(\nu^{\theta_i})$, $i=1,2$, be given, where $\nu = \mu^{1/p}\lambda^{-1/p}$, $\mu, \lambda \in A_p$, $p \in (1,\infty)$.
Define
$$
\calF = \calF(b_1, b_2) = \bigcup_{k=1}^{\infty} \{f \colon \R^{n+m} \to \R \colon \operatorname{spt}\, f \subset B(0,k) \cap \{|b_1|, |b_2| \le k\} \textup{ and } |f| \le k\}.
$$
For $f_1, f_2 \in \calF$ the pairing $\langle [b_2, [b_1, T]]f_1, f_2 \rangle$ is well defined (if $T$ is e.g. a singular integral satisfying the assumptions of the representation theorem)
and $\calF$ is dense in $L^p(\mu)$ and $L^{p'}(\lambda^{1-p'})$.
Moreover, for some $k$ we can replace $b_i$ by $b_{i,k} = \max(\min(b_i, k), -k)$. Notice that
$\| b_{i,k} \|_{\bmo(\nu^{\theta_i})} \lesssim \| b_i \|_{\bmo(\nu^{\theta_i})}$. This can be seen by using identities like
$\max(c,d) = (c+d+|c-d|)/2$, and showing that $h \in \bmo(\nu)$ implies $|h| \in \bmo(\nu)$. 

These considerations imply that below we may assume that the little BMO functions
$b_1, b_2$ are bounded and $f_1, f_2$ are bounded and compactly supported, which makes everything legitimate.

\section{Martingale difference expansions of products}\label{sec:marprod}
We recall from \cite{LMV} our modified strategy of expanding commutators.
A product $bf$ paired with Haar functions is expanded in the bi-parameter fashion only if both of the Haar functions are cancellative. In a mixed
situation we expand only in $\R^n$ or $\R^m$, and in the remaining fully non-cancellative situation we do not expand at all.
Our protocol also entails the following: when pairing with a non-cancellative Haar function we add and subtract a suitable average of $b$.

Let $\calD^n$ and $\calD^m$ be some fixed dyadic grids in $\R^n$ and $\R^m$, respectively, and write $\calD= \calD^n \times \calD^m$.
In what follows we sum over $I \in \calD^n$ and $J \in \calD^m$.

\subsubsection*{Paraproduct operators}
We define certain standard paraproduct operators:
\begin{align*}
A_1(b,f) &= \sum_{I, J} \Delta_{I \times J} b \Delta_{I \times J} f, \,\,
A_2(b,f) = \sum_{I, J} \Delta_{I \times J} b E_I^1\Delta_J^2 f, \\
A_3(b,f) &= \sum_{I, J} \Delta_{I \times J} b \Delta_I^1 E_J^2  f, \,\,
A_4(b,f) = \sum_{I, J} \Delta_{I \times J} b \bla f \bra_{I \times J},
\end{align*}
and
\begin{align*}
A_5(b,f) &= \sum_{I, J} E_I^1 \Delta_J^2 b \Delta_{I \times J} f, \,\,
A_6(b,f) = \sum_{I, J}  E_I^1 \Delta_J^2 b  \Delta_I^1 E_J^2  f, \\
A_7(b,f) &= \sum_{I, J} \Delta_I^1 E_J^2  b \Delta_{I \times J} f, \,\,
A_8(b,f) = \sum_{I, J}  \Delta_I^1 E_J^2 b E_I^1 \Delta_J^2 f.
\end{align*}
The operators are grouped into two collections, since they are handled differently (using product BMO or little BMO estimates, respectively).

We also define
$$
a^1_1(b,f) = \sum_I \Delta_I^1 b \Delta_I^1 f \qquad \textup{and} \qquad
a^1_2(b,f) = \sum_I \Delta_I^1 b E_I^1 f.
$$
The operators $a^2_1(b,f)$ and $a^2_2(b,f)$ are defined analogously.
\begin{lem}\label{lem:basicAa}
Let $\pi_b$ be $A_i(b,\cdot)$, $i=1,\ldots, 8$, or $a_j^1(b,\cdot)$, $a_j^2(b,\cdot)$, $j=1,2$. Suppose $b\in \bmo(\nu)$, where $\nu=\mu^{\frac 1p}\lambda^{-\frac 1p}$, $\mu, \lambda\in A_p$ and $p \in (1,\infty)$. Then
\begin{align*}
\|\pi_b\|_{L^p(\mu)\rightarrow L^p(\lambda)} \lesssim_{[\mu]_{A_p}, [\lambda]_{A_p}} \|b\|_{\bmo(\nu)}.
\end{align*}
\end{lem}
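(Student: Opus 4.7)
The plan is to split the twelve operators into three families according to how the martingale differences act on $b$, and to bound each family by a different mechanism. Family~(I) consists of $A_1,A_2,A_3,A_4$, which involve the bi-parameter Haar coefficients $\langle b,h_I\otimes h_J\rangle$; family~(II) consists of $A_5,\ldots,A_8$, which involve only a one-parameter martingale difference of $b$ composed with an expectation in the other variable; family~(III) consists of the one-parameter paraproducts $a_j^k$.

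For family~(I) I would test against $g\in L^{p'}(\lambda^{1-p'})$ and write
$$
|\langle \pi_b f,g\rangle| = \Bigl|\sum_{I,J}\langle b,h_I\otimes h_J\rangle\, c_{I,J}(f,g)\Bigr|,
$$
where $c_{I,J}(f,g)$ bundles the remaining coefficients coming from $f$ and $g$ (for $A_1$ these are $\langle f,h_I\otimes h_J\rangle\langle g,h_I\otimes h_J\rangle$; for $A_4$ they are $\langle f\rangle_{I\times J}\langle g,h_I\otimes h_J\rangle$, and so on). Using the square-function, bi-parameter maximal-function, and Fefferman--Stein estimates of Lemma \ref{lem:standardEst}, the coefficient function $c_{I,J}(f,g)$ can be absorbed into a weighted bi-parameter $H^1_{\nu}$-type expression bounded by $\|f\|_{L^p(\mu)}\|g\|_{L^{p'}(\lambda^{1-p'})}$. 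The factor involving $b$ is then estimated by the weighted product $H^1$--BMO duality \eqref{eq:WeightedH1BMO} together with the embedding $\bmo(\nu)\subset\BMO_{\mathrm{prod}}(\nu)$ proved in Appendix \ref{app1}. This yields the Bloom bound by $\|b\|_{\bmo(\nu)}$.

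For family~(II) I would use the characterization
$$
\|b\|_{\bmo(\nu)}\sim \max\!\Bigl(\esssup_{x_1}\|b(x_1,\cdot)\|_{\BMO(\nu(x_1,\cdot))},\ \esssup_{x_2}\|b(\cdot,x_2)\|_{\BMO(\nu(\cdot,x_2))}\Bigr).
$$
The coefficient $E_I^1 \Delta_J^2 b$ in $A_5,A_6$ depends on $x_1$ only through a local average, so for each fixed $x_1$ the operator is a one-parameter dyadic paraproduct in $x_2$ acting on a function of $x_2$ that is itself obtained from $f$ by a one-parameter operator in $x_1$ (either $\Delta_I^1$ followed by $\Delta_J^2$, or pure averaging followed by $\Delta_J^2$, etc.). Applying the classical one-parameter Bloom paraproduct bound fiberwise with the constant $\|b(x_1,\cdot)\|_{\BMO(\nu(x_1,\cdot))}\lesssim \|b\|_{\bmo(\nu)}$, integrating in $x_1$ against $\mu(\cdot,x_2)$, and using Lemma \ref{lem:standardEst} on the $x_1$-side square function or maximal function reduces the bound to $\|b\|_{\bmo(\nu)}\|f\|_{L^p(\mu)}$. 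The operators $A_7,A_8$ are symmetric. Family~(III) is handled identically by the fiberwise reduction, since each $a_j^k$ is a one-parameter paraproduct tensored with the identity.

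The \emph{main obstacle} is family~(I): little BMO is a strictly coarser space than product BMO, so a pure Carleson / testing argument applied rectangle-by-rectangle does not suffice, and one must genuinely invoke the product $H^1$--BMO duality through the open-set definition of $\BMO_{\mathrm{prod}}(\nu)$. This is precisely why Appendix \ref{app1} is needed, and why the embedding $\bmo(\nu)\subset\BMO_{\mathrm{prod}}(\nu)$ is used at exactly this point. Once this is available the remaining work is bookkeeping of the different $c_{I,J}(f,g)$ shapes arising from $A_1,\ldots,A_4$, each of which fits the weighted $H^1$-pairing after an application of Lemma \ref{lem:standardEst}.
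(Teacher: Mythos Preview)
Your treatment of families (I) and (III) matches the paper's approach: the weighted product $H^1$--$\BMO$ duality \eqref{eq:WeightedH1BMO} together with the embedding $\bmo(\nu)\subset\BMO_{\textup{prod}}(\nu)$ handles $A_1,\ldots,A_4$, and the genuine fiberwise reduction works for $a_j^k$ since the paraproduct symbol there really is the slice $b(\cdot,x_2)$ (or $b(x_1,\cdot)$).

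The gap is in family (II). For $A_5$, say, the $b$-coefficient is $E_I^1\Delta_J^2 b$, whose Haar coefficient is $\langle b,\tfrac{1_I}{|I|}\otimes h_J\rangle = \langle\langle b\rangle_{I,1},h_J\rangle$; the relevant one-parameter symbol is therefore the \emph{averaged} function $\langle b\rangle_{I,1}$, not the fiber $b(x_1,\cdot)$. Freezing $x_1$ does not collapse the sum over $I$ to a single paraproduct with symbol $b(x_1,\cdot)$: every dyadic $I\ni x_1$ contributes a different symbol $\langle b\rangle_{I,1}$, so the fiberwise Bloom bound with constant $\|b(x_1,\cdot)\|_{\BMO(\nu(x_1,\cdot))}$ is not applicable as stated. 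The paper instead fixes $I$ and applies the one-parameter analogue of \eqref{eq:WeightedH1BMO} in the $J$-variable with the averaged symbol $\langle b\rangle_{I,1}\in\BMO(\langle\nu\rangle_{I,1})$ and averaged weight $\langle\nu\rangle_{I,1}$, using $\|\langle b\rangle_{I,1}\|_{\BMO(\langle\nu\rangle_{I,1})}\le\|b\|_{\bmo(\nu)}$ and $[\langle\nu\rangle_{I,1}]_{A_2}\le[\nu]_{A_2}$; only then does one sum over $I$ via the square-function and maximal estimates of Lemma \ref{lem:standardEst}. This averaged-weight step is what your sketch for $A_5,\ldots,A_8$ is missing.
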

\begin{rem}
Notice that $\nu = (\lambda^{1-p'})^{1/p'}(\mu^{1-p'})^{-1/p'}$, where $\lambda^{1-p'}, \mu^{1-p'} \in A_{p'}$, so that
the natural dual statement concerning $\|\pi_b\|_{L^{p'}(\lambda^{1-p'}) \to L^{p'}(\mu^{1-p'}) }$ follows.
\end{rem}
\begin{proof}[Proof of Lemma \ref{lem:basicAa}]
The operators $A_i(b, \cdot)$ (but in a somewhat different form) are already discussed in \cite{HPW}.
To aid the reader we note that the proofs essentially write themselves if one knows certain weighted $H^1$-BMO type duality estimates.
For $i=1,\ldots, 4$ we use
\begin{equation}\label{eq:WeightedH1BMO}
\sum_{I, J} |\langle b, h_I \otimes h_J \rangle| |A_{IJ}| \lesssim_{[\nu]_{A_2}} \|b\|_{\BMO_{\textup{prod}}^{\calD^n, \calD^m}(\nu)} \Big\|\Big( \sum_{I,J} |A_{IJ}|^2 \frac{1_{I \times J}}{|I \times J|} \Big)^{1/2} \Big\|_{L^1(\nu)}
\end{equation}
and the fact that $\bmo_{\calD^n, \calD^m}(\nu) \subset  \BMO_{\textup{prod}}^{\calD^n, \calD^m}(\nu)$. A proof of \eqref{eq:WeightedH1BMO} is recorded in \cite{HPW} (but
even this weighted version was well-known according to them).
For $i = 5, \ldots, 8$ we may use
the one parameter analog of the estimate \eqref{eq:WeightedH1BMO} in various ways -- e.g. through the fact that it implies
(as $[\langle \nu \rangle_{I,1}]_{A_2} \le [\nu]_{A_2}$ and $\|\langle b \rangle_{I,1}\|_{\BMO(\langle \nu \rangle_{I,1})} \le  \|b\|_{\bmo(\nu)}$) that
$$
\sum_J \Big| \Big\langle b, \frac{1_I}{|I|} \otimes h_J\Big\rangle\Big| |A_{IJ}| \lesssim_{[\nu]_{A_2}} \|b\|_{\bmo_{\calD^n, \calD^m}(\nu)}
\Big\|\Big( \sum_{J} |A_{IJ}|^2 \frac{1_J}{|J|} \Big)^{1/2} \Big\|_{L^1(\langle \nu \rangle_{I,1})}.
$$
Of course, the operators $a_j^1(b,\cdot)$, $a_j^2(b,\cdot)$, $j=1,2$, can also be handled with the one parameter analog of \eqref{eq:WeightedH1BMO}.
\end{proof}

For $I_0 \in \calD^n$ and $ J_0 \in \calD^m$ we will now introduce our expansions of $\langle bf, h_{I_0} \otimes h_{J_0}\rangle$, $\big \langle bf, h_{I_0} \otimes \frac{1_{J_0}}{|J_0|}\big\rangle$ and $\langle bf \rangle_{I_0 \times J_0}$.
\subsubsection*{Expansion of $\langle bf, h_{I_0} \times h_{J_0} \rangle$}
There holds
$$
1_{I_0 \times J_0} b
= \sum_{\substack{I_1\times J_1 \in \calD \\ I_1 \times J_1 \subset I_0 \times J_0}}\Delta_{I_1 \times J_1} b
+\sum_{\substack{J_1 \in \calD^m \\ J_1 \subset J_0}} E^1_{I_0} \Delta^2_{J_1} b
+ \sum_{\substack{I_1 \in \calD^n \\ I_1 \subset I_0}} \Delta^1_{I_1} E^2_{J_0} b
+ E_{I_0 \times J_0} b.
$$
Let us denote these terms by $I_j$, $j=1,2,3,4$, in the respective order.
We have the corresponding decomposition of $f$, whose terms we denote by $II_i$, $i=1,2,3,4$. Calculating carefully the pairings
$\langle I_j II_i, h_{I_0} \otimes h_{J_0} \rangle$ we see that
\begin{equation}\label{eq:biparEX}
\langle bf, h_{I_0} \otimes h_{J_0} \rangle = \sum_{i=1}^8 \langle A_i(b, f), h_{I_0} \otimes h_{J_0} \rangle + \langle b \rangle_{I_0 \times J_0} \langle f, h_{I_0} \otimes h_{J_0} \rangle.
\end{equation}
\subsubsection*{Expansion of $\big \langle bf, h_{I_0} \otimes \frac{1_{J_0}}{|J_0|}\big\rangle$}
This time we write
$
1_{I_0} b  = \sum_{\substack{I_1 \in \calD^n \\ I_1 \subset I_0}}\Delta_{I_1}^1 b + E_{I_0}^1 b,
$
and similarly for $f$. Calculating $\langle bf, h_{I_0} \rangle_1$ we see that
\begin{equation}\label{eq:1EX}
\begin{split}
\Big \langle bf, h_{I_0} \otimes \frac{1_{J_0}}{|J_0|}\Big\rangle &= \sum_{i=1}^2 \Big\langle a_i^1(b,f), h_{I_0} \otimes \frac{1_{J_0}}{|J_0|} \Big\rangle \\
&+ \bla (\langle b \rangle_{I_0,1} - \langle b \rangle_{I_0 \times J_0}) \langle f, h_{I_0}\rangle_1\bra_{J_0}
+ \langle b \rangle_{I_0 \times J_0} \Big \langle f, h_{I_0} \otimes \frac{1_{J_0}}{|J_0|}\Big\rangle.
\end{split}
\end{equation}

When we have $\langle bf \rangle_{I_0 \times J_0}$ we do not expand at all:
\begin{equation}\label{eq:noEX}
\langle bf \rangle_{I_0 \times J_0} = \langle (b-\langle b \rangle_{I_0 \times J_0})f \rangle_{I_0 \times J_0}
+ \langle b \rangle_{I_0 \times J_0} \langle f\rangle_{I_0 \times J_0}.
\end{equation}
All of our commutators are simply decomposed using \eqref{eq:biparEX}, \eqref{eq:1EX} (and its symmetric form) and \eqref{eq:noEX} whenever
the relevant pairings/averages appear.

\section{First order commutator}\label{sec:firstorder}
Let $U = U^{k,v} = U^{k,v}_{\calD^n, \calD^m}$, $k = (k_i)$, $v = (v_i)$, $0 \le k_i \in \Z$ and $0 \le v_i \in \Z$, $i=1,2$, be a dyadic bi-parameter operator
(defined using fixed dyadic grids $\calD^n$ and $\calD^m$)
such that
\begin{equation*}
\langle Uf_1,f_2 \rangle
= \sum_{\substack{K \in \calD^n \\ V \in \calD^m}}
\sum_{\substack{I_1, I_2 \in \calD^n \\ I_1^{(k_1)} = I_2^{(k_2)} = K}}
\sum_{\substack{J_1, J_2 \in \calD^m \\ J_1^{(v_1)} = J_2^{(v_2)}  = V}} a_{K, V, (I_i), (J_j)}
\langle f_1, \wt h_{I_1} \otimes  \wt h_{J_1}\rangle \langle f_2, \wt h_{I_2} \otimes \wt h_{J_2}\rangle ,
\end{equation*}
where $a_{K, V, (I_i), (J_j)}$ are scalars and for all $i=1,2$ we have $ \wt h_{I_i}= h_{I_i}$ (a cancellative Haar function)
for all $I_i \in \calD^n$ or $\wt h_{I_i}= 1_{I_i}/|I_i|$ for all $I_i\in \calD^n$,
and similarly with the functions $\wt h_{J_j}$. To prove a Bloom type inequality for $[b,T]$, where $T$ is a bi-parameter singular integral, it is enough to prove a Bloom type
inequality for $[b,U]$, where $U$ can be a so called bi-parameter shift, partial paraproduct or a full paraproduct (we will recall what these mean later).
This is because of the dyadic bi-parameter representation
theorem \cite{Ma1} -- one only has to be maintain a polynomial dependence of $k_1, k_2, v_1, v_2$.

The basic structure is the following.
\begin{enumerate}
\item The shift case: We have
$$\langle f_1, \wt{h}_{I_1}\otimes \wt{h}_{J_1}\rangle \langle f_2, \wt{h}_{I_2}\otimes \wt{h}_{J_2} \rangle =  \langle f_1,  h_{I_1} \otimes  h_{J_1}\rangle
\langle f_2,  h_{I_2} \otimes  h_{J_2}\rangle.$$
\item The partial paraproduct case: We have $k_1 = k_2 = 0$ and
$$\langle f_1, \wt{h}_{I_1}\otimes \wt{h}_{J_1}\rangle \langle f_2, \wt{h}_{I_2}\otimes \wt{h}_{J_2} \rangle = \Big\langle f_1,  \frac{1_K}{|K|} \otimes  h_{J_1}\Big\rangle
\langle f_2,  h_{K} \otimes  h_{J_2}\rangle$$ or the symmetric case, or we have $v_1 = v_2 = 0$ and
$$
\langle f_1, \wt{h}_{I_1}\otimes \wt{h}_{J_1}\rangle \langle f_2, \wt{h}_{I_2}\otimes \wt{h}_{J_2} \rangle = \Big\langle f_1,  h_{I_1}\otimes  \frac{1_V}{|V|}\Big\rangle
\langle f_2,  h_{I_2} \otimes  h_{V}\rangle
$$
or the symmetric case.
\item The full paraproduct case: We have $k_1 = k_2 = v_1 = v_2 = 0$ and
$$\langle f_1, \wt{h}_{I_1}\otimes \wt{h}_{J_1}\rangle \langle f_2, \wt{h}_{I_2}\otimes \wt{h}_{J_2} \rangle = \langle f_1 \rangle_{K \times V}
\langle f_2,  h_{K} \otimes  h_{V}\rangle
$$ or the symmetric case,
or we have $k_1 = k_2 = v_1 = v_2 = 0$ and
$$\langle f_1, \wt{h}_{I_1}\otimes \wt{h}_{J_1}\rangle \langle f_2, \wt{h}_{I_2}\otimes \wt{h}_{J_2} \rangle = \Big\langle f_1,  h_{K} \otimes  \frac{1_V}{|V|}\Big\rangle
\Big\langle f_2,  \frac{1_K}{|K|} \otimes  h_{V}\Big\rangle$$
or the symmetric case.
\end{enumerate}
Most terms arising from our decomposition of $[b, U]$ can in fact be handled using the fact that all the model operators satisfy
for all $1<p<\infty$ and $w\in A_p(\R^{n} \times \R^{m})$ that
\begin{equation}\label{eq:1WeightForModel}
\begin{split}
\sum_{\substack{K \in \calD^n \\ V \in \calD^m}}
\sum_{\substack{I_1, I_2 \in \calD^n \\ I_i^{(k_i)} = K}}
\sum_{\substack{J_1, J_2 \in \calD^m \\ J_j^{(v_j)} = V}} \big|a_{K, V, (I_i), (J_i)}  \langle f_1, \wt{h}_{I_1}\otimes &\wt{h}_{J_1}\rangle \langle f_2, \wt{h}_{I_2}\otimes \wt{h}_{J_2} \rangle\big| \\
&\lesssim C([w]_{A_p}) \|f_1\|_{L^p(w)}\|f_2\|_{L^{p'}(w^{1-p'})}.
\end{split}
\end{equation}
Given some suitable BMO function $b$ let us also define $U^b$ via
\begin{align*}
\langle U^b f_1,f_2 \rangle
= \sum_{\substack{K \in \calD^n \\ V \in \calD^m}}
\sum_{\substack{I_1, I_2 \in \calD^n \\ I_i^{(k_i)} = K}}
\sum_{\substack{J_1, J_2 \in \calD^m \\ J_j^{(v_j)} = V}}&a_{K, V, (I_i), (J_j)} [\langle b \rangle_{I_2 \times J_2} - \langle b \rangle_{I_1 \times J_1}] \\
&\times
\langle f_1, \wt h_{I_1} \otimes  \wt h_{J_1}\rangle \langle f_2, \wt h_{I_2} \otimes \wt h_{J_2}\rangle.
\end{align*}
In the unweighted (or one weight case) the boundedness of $U^b$ can be reduced to \eqref{eq:1WeightForModel} via the simple observation that
$$
|\langle b \rangle_{I_2 \times J_2} - \langle b \rangle_{I_1 \times J_1}| \lesssim \|b\|_{\bmo(\R^n \times \R^m)} \max(k_i, v_i).
$$
However, if we want to prove a Bloom type inequality for $U^b$, and this is key for the Bloom type inequality for $[b, U]$,
we have to run a harder adaptation of the proof of \eqref{eq:1WeightForModel}. This requires recalling more carefully what the assumptions
about the coefficients $a_{K,V, \ldots}$ are in each case.
Notice also that $U^b = 0$ when $k=v=0$ i.e. $U^b$ does not arise in the full paraproduct case. Moreover, the Bloom type inequality for $U^b$ is much harder when $U$
is a partial paraproduct compared to the case that $U$ is a shift (we use sparse bounds of bilinear paraproducts to handle the partial paraproduct case).

Despite having to deal with $U^b$ separately, it is extremely convenient to blackbox \eqref{eq:1WeightForModel}.
Such a weighted bound for all model operators was first recorded in \cite{HPW}. The
proof is essentially the same with or without weights (in the weighted case one just uses weighted versions of square function and maximal function bounds at the end).
We note that
a reader who is not familiar with the fundamental basic bound \eqref{eq:1WeightForModel} can essentially read the proof from the current paper also. Indeed, for full paraproducts one can consult Lemma \ref{lem:basicAa}, and for the other model operators the bounds proved for $U^b$ are harder,
and in fact an easier version of those arguments can also be used to get \eqref{eq:1WeightForModel}.
\subsection{The shift case}
We show that if $U = U^{k,v}$ is a shift then
$$
|\langle [b, U]f_1, f_2\rangle| \lesssim_{[\mu]_{A_p}, [\lambda]_{A_p}} \|b\|_{\bmo(\nu)} (1+\max(k_i, v_i)) \|f_1\|_{L^p(\mu)} \|f_2\|_{L^{p'}(\lambda^{1-p'})}.
$$

Using our general decomposition philosophy from Section \ref{sec:marprod} we see that
\begin{equation}\label{eq:Com1ofShift}
\begin{split}
&\langle [b, U]f_1, f_2\rangle = \sum_{i=1}^8 \langle Uf_1, A_i(b, f_2) \rangle
- \sum_{i=1}^8 \langle U( A_i(b, f_1)), f_2 \rangle + \langle U^b f_1, f_2\rangle.
\end{split}
\end{equation}
The first term is easy using Lemma \ref{lem:basicAa} and \eqref{eq:1WeightForModel} as
\begin{align*}
|\langle Uf_1, A_i(b, f_2) \rangle| &\le \|Uf_1\|_{L^p(\mu)} \|A_i(b, f_2)\|_{L^{p'}(\mu^{1-p'})} \\
&\lesssim_{[\mu]_{A_p}, [\lambda]_{A_p}} \|b\|_{\bmo(\nu)} \|f_1\|_{L^p(\mu)} \|f_2\|_{L^{p'}(\lambda^{1-p'})},
\end{align*}
and the second one is handled similarly.

To handle $U^b$ we begin by splitting
\begin{equation}\label{eq:av-av-split}
\begin{split}
\langle b \rangle_{I_2 \times J_2} - \langle b \rangle_{I_1 \times J_1} &=
[\langle b \rangle_{I_2 \times J_2} - \langle b \rangle_{K \times J_2}]
+ [\langle b \rangle_{K \times J_2} - \langle b \rangle_{K \times V}] \\
&+ [\langle b \rangle_{K \times V} -  \langle b \rangle_{K \times J_1}] +
[\langle b \rangle_{K \times J_1} - \langle b \rangle_{I_1 \times J_1}].
\end{split}
\end{equation}
The resulting four terms are essentially symmetric, so we only deal with the first one.  There holds that
\begin{equation}\label{eq:av-av}
|\langle b \rangle_{I_2 \times J_2} - \langle b \rangle_{K \times J_2}| \lesssim  \|b\|_{\bmo(\nu)} \sum_{\substack{L \in \calD^n \\ I_2 \subsetneq L \subset K}}
\frac{\nu(L \times J_2)}{|L \times J_2|}.
\end{equation}
Using this we see that it is enough to fix one $l \in \{1, \dots, k_2\}$ and estimate the term
\begin{equation}\label{eq:ShiftAv-Av}
\begin{split}
\sum_{\substack{K \in \calD^n \\ V \in \calD^m}}
&\sum_{\substack{L \in \calD^n \\ L^{(k_2-l)}=K}}
\sum_{\substack{J_2 \in \calD^m \\ J_2^{(v_2)} = V}} \\
& 
\frac{\nu(L \times J_2)}{|L \times J_2|}
\sum_{\substack{I_1, I_2 \in \calD^n \\ I_1^{(k_1)}=K \\ I_2^{(l)} = L}} \sum_{\substack{J_1 \in \calD^m \\ J_1^{(v_1)} = V}}
\big| a_{K, V, (I_i), (J_i)} \langle f_1,  h_{I_1} \otimes  h_{J_1}\rangle
\langle f_2,  h_{I_2} \otimes  h_{J_2}\rangle\big|.
\end{split}
\end{equation}

Now we use the fact that
$$
|a_{K, V, (I_i), (J_i)}| \le \frac{|I_1|^{1/2}|I_2|^{1/2}}{|K|} \frac{|J_1|^{1/2}|J_2|^{1/2}}{|V|},
$$
which implies that
\begin{align*}
\frac{\nu(L \times J_2)}{|L \times J_2|} &
\sum_{\substack{I_1, I_2 \in \calD^n \\ I_1^{(k_1)}=K \\ I_2^{(l)} = L}} \sum_{\substack{J_1 \in \calD^m \\ J_1^{(v_1)} = V}}
\big| a_{K, V, (I_i), (J_i)} \langle f_1,  h_{I_1} \otimes  h_{J_1}\rangle
\langle f_2,  h_{I_2} \otimes  h_{J_2}\rangle\big| \\
&\le \nu(L \times J_2) \langle |\Delta_{K \times V}^{k_1, v_1} f_1| \rangle_{K \times V} \langle |\Delta_{K \times V}^{k_2, v_2} f_2| \rangle_{L \times J_2} \\
&\le \iint 1_{L \times J_2} M_{\calD^n, \calD^m}(\Delta_{K \times V}^{k_1, v_1} f_1) M_{\calD^n, \calD^m}(\Delta_{K \times V}^{k_2, v_2} f_2) \nu.
\end{align*}
Using this we see that \eqref{eq:ShiftAv-Av} can be dominated by
\begin{align*}
&\sum_{\substack{K \in \calD^n \\ V \in \calD^m}}  \iint M_{\calD^n, \calD^m}(\Delta_{K \times V}^{k_1, v_1} f_1) M_{\calD^n, \calD^m}(\Delta_{K \times V}^{k_2, v_2} f_2) \nu \\
&\le \Big\| \Big( \sum_{\substack{K \in \calD^n \\ V \in \calD^m}}(M_{\calD^n, \calD^m}\Delta_{K \times V}^{k_1, v_1} f_1)^2 \Big)^{1/2} \Big\|_{L^p(\mu)}
\Big\| \Big(\sum_{\substack{K \in \calD^n \\ V \in \calD^m}} (M_{\calD^n, \calD^m}\Delta_{K \times V}^{k_2, v_2} f_2)^2 \Big)^{1/2} \Big\|_{L^{p'}(\lambda^{1-p'})} \\
&\lesssim_{[\mu]_{A_p}, [\lambda]_{A_p}} \|f_1\|_{L^p(\mu)} \|f_2\|_{L^{p'}(\lambda^{1-p'})},
\end{align*}
where in the second step we used that $\nu = \mu^{1/p}\lambda^{-1/p}$. We are done with the shifts.

\subsection{The partial paraproduct case}
We now deal with the partial paraproducts, and we choose the symmetry
\begin{equation}\label{eq:FormOfPP}
\langle Uf_1, f_2\rangle = \sum_{\substack{K \in \calD^n \\ V \in \calD^m}} \sum_{\substack{I_1, I_2 \in \calD^n \\ I_1^{(k_1)}= I_2^{(k_2)} = K}} a_{K, V, (I_i)}
\Big\langle f_1,  h_{I_1}\otimes  \frac{1_V}{|V|}\Big\rangle
\langle f_2,  h_{I_2} \otimes  h_{V}\rangle.
\end{equation}
We will show that
$$
|\langle [b, U]f_1, f_2\rangle| \lesssim_{[\mu]_{A_p}, [\lambda]_{A_p}} \|b\|_{\bmo(\nu)} (1+\max(k_1, k_2)) \|f_1\|_{L^p(\mu)} \|f_2\|_{L^{p'}(\lambda^{1-p'})}.
$$

Using our general decomposition philosophy from Section \ref{sec:marprod} we see that
\begin{equation}\label{eq:partial}
\begin{split}
\langle &[b, U]f_1, f_2\rangle = \sum_{i=1}^8 \langle Uf_1, A_i(b, f_2) \rangle 
- \sum_{i=1}^2 \langle U(a_i^1(b, f_1)), f_2 \rangle + \langle U^b f_1, f_2\rangle \\
&- \sum_{\substack{K \in \calD^n \\ V \in \calD^m}} \sum_{\substack{I_1, I_2 \in \calD^n \\ I_1^{(k_1)}= I_2^{(k_2)} = K}} a_{K, V, (I_i)}
\bla (\langle b \rangle_{I_1, 1} - \bla b \rangle_{I_1 \times V})\langle f_1, h_{I_1}\rangle_1 \bra_V\langle f_2,  h_{I_2} \otimes  h_{V}\rangle.
\end{split}
\end{equation}
The first two terms are handled precisely as in the shift case. The last term is directly under control using \eqref{eq:1WeightForModel} and
the following lemma.

\begin{lem}\label{lem:PhiNuLemma}
Let $p \in (1, \infty)$ and $\mu, \lambda \in A_p(\R^n \times \R^m)$.
Assume that $b \in \bmo(\nu)$, where $\nu=\mu^{1/p} \lambda^{-1/p}$.
Let $I \in \calD^n$ and $J \in \calD^m$.  Then
$$
\big|\bla (\langle b \rangle_{I, 1} - \langle b \rangle_{I \times J})\langle f, h_{I}\rangle_1 \bra_J \big |
\lesssim_{[\nu]_{A_2}} \| b \|_{\bmo(\nu)}\Big\langle \varphi_{\calD^n, \calD^m}^{\nu,1 } f, h_I \otimes \frac{1_J}{|J|} \Big\rangle,
$$
where
$$
\varphi_{\calD^n, \calD^m}^{\nu,1 } f
= \sum_{I \in \calD^n} h_I \otimes   M_{\calD^m}( \langle f, h_I \rangle_1) \langle \nu \rangle_{I,1}.
$$
Moreover, we have
$$
\|\varphi_{\calD^n, \calD^m}^{\nu,1 } \|_{L^p(\mu) \to L^p(\lambda)} \lesssim_{[\mu]_{A_p}, [\lambda]_{A_p}} 1.
$$
\end{lem}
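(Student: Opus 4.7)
My plan splits the lemma into two parts; I handle the $L^p(\mu)\to L^p(\lambda)$ bound of $\varphi_{\calD^n,\calD^m}^{\nu,1}$ first, then the weighted pointwise estimate.

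For the mapping bound, I will argue by duality, testing against $\psi\in L^{p'}(\lambda^{1-p'})$. Using the Fubini identity $\langle \nu\rangle_{I,1}(x_2)=\frac{1}{|I|}\int_I\nu(y_1,x_2)\,dy_1$ to move the average inside, the pairing becomes
\begin{equation*}
\iint \nu(y_1,x_2)\sum_I \frac{1_I(y_1)}{|I|} M_{\calD^m}\langle f,h_I\rangle_1(x_2)\cdot \langle \psi, h_I\rangle_1(x_2)\,dy_1\,dx_2.
\end{equation*}
A Cauchy--Schwarz in the $I$-sum bounds this by $\int \nu F G$, where $F$ and $G$ are precisely the $\ell^2$-square functions from the last display in Lemma \ref{lem:standardEst}. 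I then apply H\"older's inequality, using the key identity $\nu^{p'}\mu^{1-p'}=\lambda^{1-p'}$ (immediate from $\nu=\mu^{1/p}\lambda^{-1/p}$ since $p'/p=p'-1$), to get $\int \nu FG \le \|F\|_{L^p(\mu)}\|G\|_{L^{p'}(\lambda^{1-p'})}$. Applying Lemma \ref{lem:standardEst} to each of these square functions (with weights $\mu\in A_p$ and $\lambda^{1-p'}\in A_{p'}$ respectively) finishes Part 2.

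For the pointwise estimate, I first unfold the right-hand side. Haar orthogonality in the $\R^n$-variable gives
$\langle \varphi^{\nu,1}_{\calD^n,\calD^m}f, h_I\otimes\frac{1_J}{|J|}\rangle = \frac{1}{|J|}\int_J M_{\calD^m}\langle f,h_I\rangle_1(x_2)\,\langle \nu\rangle_{I,1}(x_2)\,dx_2$. Writing $g:=\langle b\rangle_{I,1}$, $h:=\langle f,h_I\rangle_1$, $w:=\langle \nu\rangle_{I,1}$ and noting $\langle b\rangle_{I\times J}=\langle g\rangle_J$ by Fubini, the target reduces to the one-parameter inequality
\begin{equation*}
\Big|\frac{1}{|J|}\int_J (g-\langle g\rangle_J)\,h\,dx_2\Big|\lesssim_{[w]_{A_2}} \|g\|_{\BMO_{\calD^m}(w)}\cdot \frac{1}{|J|}\int_J M_{\calD^m}(h)\,w\,dx_2,
\end{equation*}
with $w\in A_2(\R^m)$, $\|g\|_{\BMO_{\calD^m}(w)}\le \|b\|_{\bmo(\nu)}$, and $[w]_{A_2}\le [\nu]_{A_2}$.

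For this one-parameter inequality my plan is to use a weighted form of Lerner's median oscillation / sparse decomposition. Chebyshev's inequality applied to the $\BMO(w)$ condition controls the median oscillation of $g$ on every subcube $P\subset J$ by $\|g\|_{\BMO(w)}\langle w\rangle_P$, so Lerner's theorem produces a sparse family $\mathcal{S}\subset \calD^m(J)$ with $|g(x_2)-\langle g\rangle_J|\lesssim \|g\|_{\BMO(w)}\sum_{P\in\mathcal{S}}\langle w\rangle_P 1_P(x_2)$. Integrating against $|h|$ reduces the task to dominating $\sum_P w(P)\langle|h|\rangle_P$; using the standard $A_\infty$-sparse bound $w(P)\lesssim_{[w]_{A_\infty}}w(E_P)$ for the disjoint major parts $E_P\subset P$, together with $\langle|h|\rangle_P\le M_{\calD^m}h(x_2)$ for $x_2\in E_P$, this collapses into $\int_J M_{\calD^m}h\cdot w\,dx_2$, finishing the argument. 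The main obstacle I expect is this sparse/$A_\infty$ step, since it is responsible for the quantitative dependence on $[\nu]_{A_2}$ and for turning the oscillation-weighted cubes $\langle w\rangle_P 1_P$ into the unweighted maximal function on the right-hand side; the remainder is bookkeeping and standard square-function theory.
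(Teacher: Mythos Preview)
Your proof is correct and follows essentially the same strategy as the paper: both reduce the pointwise estimate to the one-parameter inequality $\int_J |b_0 - \langle b_0\rangle_J|\,|g| \lesssim [w]_{A_\infty}\|b_0\|_{\BMO(w)}\int_J M_{\calD^m}(g)\,w$ and prove it via sparse domination (the paper quotes the Lerner--Ombrosi--Rivera-R\'ios sparse lemma \eqref{eq:lor}, while you derive the equivalent bound from Lerner's median-oscillation theorem together with the $A_\infty$ property $w(P)\lesssim w(E_P)$---this is exactly the ``Lebesgue sparse implies $w$-Carleson'' step in the paper), and both use the square-function estimates of Lemma~\ref{lem:standardEst} together with the weight identity relating $\nu$, $\mu$, $\lambda$ for the mapping bound. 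Your duality organisation of the $L^p(\mu)\to L^p(\lambda)$ bound is slightly cleaner than the paper's direct argument, which first applies the square-function equivalence in $L^p(\lambda)$ and must then pass from $\langle\nu\rangle_{I,1}$ inside the sum to the pointwise factor $\nu$ outside before invoking $\nu^p\lambda=\mu$; your pairing against $\psi$ and H\"older with $\nu=\mu^{1/p}\lambda^{-1/p}$ accomplishes the same in one step.
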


\begin{proof}
We will use the one parameter estimate
\[
\frac 1{|J|}\int_J |b_0-\langle b_0 \rangle_J|  |g|\lesssim [w]_{A_\infty}  \|b_0\|_{\BMO(w)} \frac 1{|J|}\int_J M_{\calD^m}(g)w.
\]
Let us prove this. Using Lemma 5.1 in \cite{LOR} we find a sparse family $\mathcal S = \mathcal S(J, b)$ such that
\begin{equation}\label{eq:lor}
|b_0-\langle b_0\rangle_J| 1_J\le 2^{n+2} \sum_{\substack{Q\in \mathcal S\\ Q\subset J}}\big \langle |b_0-\langle b_0\rangle_Q|\big\rangle_Q 1_Q.
\end{equation}
Therefore, we have
\begin{align*}
\int_J |b_0-\langle b_0 \rangle_J|  |g|&\lesssim \sum_{\substack{Q\in \mathcal S\\ Q\subset J}}\big \langle |b_0-\langle b_0\rangle_Q|\big\rangle_Q \int_Q |g|\\
&\le \|b_0\|_{\BMO(w)} \sum_{\substack{Q\in \mathcal S\\ Q\subset J}}\big \langle |g|\big\rangle_Q w(Q)\\
&\le \|b_0\|_{\BMO(w)} \sum_{\substack{Q\in \mathcal S\\ Q\subset J}}\big [\langle M_{\calD^m}(g)^{\frac 12}\big\rangle_Q^{w}]^2 w(Q) \\
&\lesssim [w]_{A_\infty}  \|b_0\|_{\BMO(w)} \int_J M_{\calD^m}(g)w,
\end{align*}where in the last step we have used the Carleson embedding theorem (notice that Lebesgue sparse implies $w$-Carleson).

Now, we have
\begin{align*}
\big|\bla (\langle b \rangle_{I, 1} - \bla b \rangle_{I \times J})\langle f, h_{I}\rangle_1 \bra_J \big | &\lesssim [\langle \nu \rangle_{I,1}]_{A_{\infty}}
\|\langle b \rangle_{I,1}\|_{\BMO(\langle \nu \rangle_{I,1})}
\bla M_{\calD^m}(\langle f, h_I\rangle_1)\langle \nu \rangle_{I,1} \bra_J \\
&=  [\langle \nu \rangle_{I,1}]_{A_{\infty}}
\|\langle b \rangle_{I,1}\|_{\BMO(\langle \nu \rangle_{I,1})} \Big\langle \varphi_{\calD^n, \calD^m}^{\nu,1 } f, h_I \otimes \frac{1_J}{|J|} \Big\rangle,
\end{align*}
and then recall that $[\langle \nu \rangle_{I,1}]_{A_2} \le [\nu]_{A_2}$ and $\|\langle b \rangle_{I,1}\|_{\BMO(\langle \nu \rangle_{I,1})} \le  \|b\|_{\bmo(\nu)}$.

Next, we have
\begin{align*}
\|\varphi_{\calD^n, \calD^m}^{\nu,1 }f \|_{L^p(\lambda)} &\sim_{[\lambda]_{A_p}} \Big\| \Big( \sum_{I \in \calD^n} \frac{1_I}{|I|} \otimes [M_{\calD^m}( \langle f, h_I \rangle_1) \langle \nu \rangle_{I,1}]^2 \Big)^{1/2} \Big\|_{L^p(\lambda)} \\
&\lesssim_{[\lambda]_{A_p}} \Big\| \Big( \sum_{I \in \calD^n} \frac{1_I}{|I|} \otimes [M_{\calD^m} \langle f, h_I \rangle_1 ]^2  \Big)^{1/2}\nu \Big\|_{L^p(\lambda)} \\
&= \Big\| \Big( \sum_{I \in \calD^n} \frac{1_I}{|I|} \otimes [M_{\calD^m} \langle f, h_I \rangle_1 ]^2  \Big)^{1/2} \Big\|_{L^p(\mu)} \lesssim_{[\mu]_{A_p}} \|f\|_{L^p(\mu)}.
\end{align*}
\end{proof}

We now take care of the remaining $U^b$ term.
This key term also arises in \cite{HPW} where it is actually omitted
by saying that it goes similarly as a certain other term (which does not arise at all in our decomposition). 
To handle this term we find it necessary to use somewhat sophisticated tools via bilinear sparse domination.

Similarly as in the shift case it is enough to fix $l \in \{1, \dots, k_1\}$ and estimate the term
\begin{equation}\label{eq:PPAv-Av}
\begin{split}
\sum_{K \in \calD^n}&
\sum_{\substack{L \in \calD^n \\ L^{(k_1-l)}=K}} \int_{\R^n} \frac{1_L(x_1)}{|L|} \sum_{\substack{I_1, I_2 \in \calD^n \\ I_1^{(l)}=L \\ I_2^{(k_2)} = K}} \\
& 
 \sum_{V \in \calD^m} \int_{\R^m} \frac{1_V(x_2)}{|V|}
\Big| a_{K, V, (I_i)} \Big\langle f_1,  h_{I_1}\otimes  \frac{1_V}{|V|}\Big\rangle
\langle f_2,  h_{I_2} \otimes  h_{V}\rangle\Big| \nu(x_1,x_2)\ud x_2 \ud x_1.
\end{split}
\end{equation}
From the sparse domination of bilinear paraproducts (see e.g. \cite{LMOV}) we can deduce (see Lemma 6.7 in \cite{LMV}) that
\begin{equation}\label{eq:ApplySparse}
\begin{split}
\int_{\R^m} &\Big( \sum_{V \in \calD^m} \big| a_{K, V, (I_i)} \bla \langle f_1, h_{I_1} \rangle_1 \bra_V \langle \langle f_2, h_{I_2}\rangle_1, h_V\rangle\big| \frac{1_V}{|V|} \Big)\rho \\
&\lesssim_{[\rho]_{A_{\infty}(\R^m)}} \frac{|I_1|^{1/2} |I_2|^{1/2}}{|K|} \int_{\R^m} M_{\calD^m} ( \langle f_1, h_{I_1} \rangle_1 ) M_{\calD^m}( \langle f_2, h_{I_2}\rangle_1)\rho.
\end{split}
\end{equation}
This requires knowing that we have
$
\sup_{V_0 \in \calD^m} \Big( \frac{1}{|V_0|} \sum_{\substack{V \in \calD^m \\ V \subset V_0}} |a_{K, V, (I_i)}|^2\Big)^{1/2} \le \frac{|I_1|^{1/2} |I_2|^{1/2}}{|K|}.
$
Recall the function $\varphi_{\calD^n, \calD^m}^1$ from Lemma \ref{lem:standardEst} and then notice the identity $M_{\calD^m} ( \langle f_1, h_{I_1} \rangle_1) = \langle \varphi_{\calD^n, \calD^m}^1 f_1 , h_{I_1} \rangle_1$. We 
now see that \eqref{eq:PPAv-Av} can be dominated in the $\lesssim_{[\mu]_{A_p}, [\lambda]_{A_p}}$ sense by
\begin{align*}
 \sum_{K \in \calD^n} \sum_{\substack{L \in \calD^n \\ L^{(k_1-l)}=K}} &\iint \frac{1_L}{|L|} \sum_{\substack{I_1, I_2 \in \calD^n \\ I_1^{(l)}=L \\ I_2^{(k_2)} = K}}
 \frac{|I_1|^{1/2} |I_2|^{1/2}}{|K|}   \langle \varphi_{\calD^n, \calD^m}^1 f_1 , h_{I_1} \rangle_1  \langle \varphi_{\calD^n, \calD^m}^1 f_2, h_{I_2}\rangle_1\nu \\
 &\le \sum_{K \in \calD^n} \sum_{\substack{L \in \calD^n \\ L^{(k_1-l)}=K}} \iint 1_L \langle |\Delta_{K, k_1}^1 \varphi_{\calD^n, \calD^m}^1 f_1| \rangle_{L,1}
 \langle |\Delta_{K, k_2}^1 \varphi_{\calD^n, \calD^m}^1 f_2| \rangle_{K,1} \nu \\
 &\le \sum_{K \in \calD^n} \iint M_{\calD^n}^1  \Delta_{K, k_1}^1 \varphi_{\calD^n, \calD^m}^1 f_1\cdot M_{\calD^n}^1 \Delta_{K, k_2}^1 \varphi_{\calD^n, \calD^m}^1 f_2 \cdot \nu \\
& \le \Big\| \Big( \sum_{K \in \calD^n} [M_{\calD^n}^1  \Delta_{K, k_1}^1 \varphi_{\calD^n, \calD^m}^1 f_1]^2 \Big)^{1/2} \Big\|_{L^p(\mu)} \\
&\hspace{3cm} \times  \Big\| \Big( \sum_{K \in \calD^n} [M_{\calD^n}^1  \Delta_{K, k_2}^1 \varphi_{\calD^n, \calD^m}^1 f_2]^2 \Big)^{1/2} \Big\|_{L^{p'}(\lambda^{1-p'})} \\
 & \lesssim_{[\mu]_{A_p}, [\lambda]_{A_p}} \|f_1\|_{L^p(\mu)} \|f_2\|_{L^{p'}(\lambda^{1-p'})}.
\end{align*}

\subsection{The full paraproduct case}
Depending on the form of $U$ (we have two genuinely different symmetries here), we get different terms in the expansion (following Section \ref{sec:marprod}) of
$\langle [b, U]f_1, f_2\rangle$. However, after minor thought (recall also that $U^b = 0$) the reader will understand
that the only type of term that we have not seen before is
\begin{equation}\label{eq:fullPOnly}
\sum_{\substack{K\in \calD^n \\  V\in \calD^m}} a_{K,V} \bla (b-\langle b \rangle_{K \times V})f_1\bra_{K \times V} \langle f_2, h_K \otimes h_V\rangle.
\end{equation}
To handle this via \eqref{eq:1WeightForModel} we introduce the following maximal function:
\[
M_{\calD^n, \calD^m}^b(f)=\sup_R \frac {1_R}{|R|}\int_R |b-\langle b \rangle_R||f|,
\]
where $R = I \times J \in \calD^n \times \calD^m$.
\begin{prop}\label{prop:bloomforMb}
Let $p \in (1,\infty)$ and $b\in \bmo(\nu)$, where $\mu, \lambda \in A_p$ and $\nu=\mu^{1/p} \lambda^{-1/p}$. Then we have
\[
\| M_{\calD^n, \calD^m}^b\|_{L^p(\mu)\rightarrow L^p(\lambda)}\lesssim_{[\mu]_{A_p}, [\lambda]_{A_p}} \|b\|_{\bmo(\nu)}.
\]
\end{prop}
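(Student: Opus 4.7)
My plan is to pointwise dominate $M^b_{\calD^n,\calD^m}f$ by a Fefferman-type $\lambda$-weighted maximal function applied to a suitable transform of $f$, and then take $L^p(\lambda)$-norms using a small Jensen upgrade together with Fefferman's $A_\infty$ maximal function bound.

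First, for each rectangle $R$ I would apply H\"older's inequality with the splitting $\mu^{-1/p}\cdot\mu^{1/p}$ to separate $b$ from $f$:
\[
\frac{1}{|R|}\int_R |b-\langle b\rangle_R||f|
\le \langle |b-\langle b\rangle_R|^{p'}\mu^{1-p'}\rangle_R^{1/p'}\langle |f|^p\mu\rangle_R^{1/p}.
\]
Next I invoke the dual form of the two-weight John--Nirenberg inequality \eqref{eq:2JN}, obtained by swapping $p\leftrightarrow p'$, $\mu\leftrightarrow\lambda^{1-p'}$ and $\lambda\leftrightarrow\mu^{1-p'}$ (so that $\nu$ is preserved and $\mu^{1-p'},\lambda^{1-p'}\in A_{p'}$):
\[
\langle |b-\langle b\rangle_R|^{p'}\mu^{1-p'}\rangle_R^{1/p'}
\lesssim_{[\mu]_{A_p},[\lambda]_{A_p}} \|b\|_{\bmo(\nu)}\langle \lambda^{1-p'}\rangle_R^{1/p'}.
\]
I then make the change of density $h := f\nu$, so that $|f|^p\mu = |h|^p\lambda$ (since $\nu^p\lambda = \mu$) and $\langle |f|^p\mu\rangle_R = \langle \lambda\rangle_R\,\langle |h|^p\rangle_R^{\lambda}$. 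The $A_p$ condition for $\lambda$ delivers $\langle \lambda^{1-p'}\rangle_R^{1/p'}\langle\lambda\rangle_R^{1/p}\le [\lambda]_{A_p}^{1/p}$, and taking the supremum over $R\ni x$ combines the pieces into the pointwise bound
\[
M^b_{\calD^n,\calD^m}f(x) \lesssim_{[\mu]_{A_p},[\lambda]_{A_p}} \|b\|_{\bmo(\nu)}\sup_{R \ni x}\langle |h|^p\rangle_R^{\lambda,\,1/p}.
\]

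It remains to take the $L^p(\lambda)$-norm of the right-hand side. The naive attempt to write this as $\|M_{\calD^n,\calD^m,\lambda}(|h|^p)\|_{L^1(\lambda)}^{1/p}$ fails, because Fefferman's $\lambda$-weighted strong maximal function is only weak-type $(1,1)$ with respect to $\lambda$. The remedy is a mild self-improvement via Jensen's inequality: for any $r \in (1,p)$,
\[
\langle |h|^p\rangle_R^{\lambda} \le \big(\langle |h|^{pr}\rangle_R^{\lambda}\big)^{1/r},
\]
so that $\sup_{R\ni x}\langle |h|^p\rangle_R^{\lambda,\,1/p}\le \big[M_{\calD^n,\calD^m,\lambda}(|h|^{pr})(x)\big]^{1/(pr)}$. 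Since $p/r>1$ and $\lambda\in A_\infty$, Fefferman's estimate (recalled in Appendix~\ref{app2}) provides the $L^{p/r}(\lambda)$-boundedness of $M_{\calD^n,\calD^m,\lambda}$, and after taking $L^p(\lambda)$-norms I obtain
\[
\|M^b_{\calD^n,\calD^m}f\|_{L^p(\lambda)}
\lesssim \|b\|_{\bmo(\nu)}\|h\|_{L^p(\lambda)}
= \|b\|_{\bmo(\nu)}\|f\|_{L^p(\mu)},
\]
as desired.

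The main obstacle is precisely the passage to the $L^p(\lambda)$-norm: there is no escaping the bi-parameter $A_\infty$ maximal function estimate highlighted in the introduction, and without the Jensen self-improvement we are stuck in the forbidden $L^1(\lambda)$ endpoint. Every other ingredient is elementary once one has the two-weight John--Nirenberg inequality \eqref{eq:2JN} at hand.
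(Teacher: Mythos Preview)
Your overall strategy matches the paper's, but the Jensen step contains a fatal arithmetic slip that breaks the argument. From your pointwise bound
\[
M^b_{\calD^n,\calD^m}f(x)\lesssim\|b\|_{\bmo(\nu)}\big[M_{\calD^n,\calD^m,\lambda}(|h|^{pr})(x)\big]^{1/(pr)},
\]
taking the $L^p(\lambda)$-norm yields
\[
\big\|[M_{\calD^n,\calD^m,\lambda}(|h|^{pr})]^{1/(pr)}\big\|_{L^p(\lambda)}
=\|M_{\calD^n,\calD^m,\lambda}(|h|^{pr})\|_{L^{1/r}(\lambda)}^{1/(pr)},
\]
not $L^{p/r}(\lambda)$: the exponent is $p/(pr)=1/r<1$. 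No maximal function is bounded on $L^{1/r}$ for $r>1$, so Fefferman's estimate is inapplicable. (Incidentally, the bi-parameter weighted strong maximal function is not even weak-type $(1,1)$, so the situation at the endpoint is worse than you state.) The Jensen step can only \emph{raise} the inner power, which \emph{lowers} the outer Lebesgue exponent further; you cannot manufacture room this way after having already spent the full exponent $p$ in the H\"older step.

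The paper's proof creates the room \emph{before} H\"older, via $A_p$ self-improvement: pick $q\in(1,p)$ with $\mu,\lambda\in A_q$, set $\mu_0:=\mu^{q/p}\lambda^{1-q/p}\in A_q$ (so that $\mu_0^{1/q}\lambda^{-1/q}=\nu$ and the two-weight John--Nirenberg still applies), and run your argument with H\"older exponent $q$. One arrives at
\[
M^b_{\calD^n,\calD^m}f(x)\lesssim\|b\|_{\bmo(\nu)}\big[M_{\calD^n,\calD^m,\lambda}(|f|^{q}\mu_0\lambda^{-1})(x)\big]^{1/q},
\]
and now the outer exponent is $p/q>1$, so Fefferman's $L^{p/q}(\lambda)$-bound applies; the identity $(\mu_0\lambda^{-1})^{p/q}\lambda=\mu$ then recovers $\|f\|_{L^p(\mu)}$. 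Your change of variable $h=f\nu$ is just the special case $q=p$, which is precisely the forbidden endpoint.
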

\begin{proof}
There exists some $1<q<p$ such that $\mu, \lambda\in A_q$. Then H\"older's inequality implies that
$
\mu_0= \mu^{\frac qp} \lambda^{1-\frac qp}\in A_q.
$
Since $\mu_0^{\frac 1q}\lambda^{-\frac 1q}=\nu$, by the two-weight John-Nirenberg for little BMO \eqref{eq:2JN} we have for all $x$ and $R \ni x$ that
\begin{align*}
 \frac 1{|R|}\int_R |b-\langle b \rangle_R| |f|&\le \Big(  \frac 1{|R|} \int_R |b-\langle b\rangle_R |^{q'}\mu_0^{1-q'}\Big)^{\frac 1{q'}}\Big(  \frac 1{|R|} \int_R |f |^{q}\mu_0 \Big)^{\frac 1{q}}\\
 &\lesssim_{[\mu]_{A_p}, [\lambda]_{A_p}} \|b\|_{\bmo(\nu)} \frac{\lambda^{1-q'}(R)^{\frac 1{q'}}}{|R|}\Big(   \int_R |f |^{q}\mu_0 \Big)^{\frac 1{q}}\\
 &\lesssim_{[\lambda]_{A_p}} \|b\|_{\bmo(\nu)}\Big( \frac 1{\lambda(R)}  \int_R |f |^{q}\mu_0 \Big)^{\frac 1{q}}\\
 &\le \|b\|_{\bmo(\nu)} M_{\calD^n, \calD^m, \lambda}(|f|^q \mu_0 \lambda^{-1})(x)^{\frac 1q}.
\end{align*}
The claim now follows from the boundedness property $M_{\calD^n, \calD^m, \lambda} \colon L^{p/q}(\lambda) \to L^{p/q}(\lambda)$ 
and the observation that $(\mu_0\lambda^{-1})^{\frac pq} \lambda=\mu$. The first mentioned fact is non-trivial as $\lambda$ is not of product form -- but
it has been proved by R. Fefferman in \cite{Fe3} using the $A_{\infty}$ property of $\lambda$. For clarity we give a proof in our dyadic setting in Appendix \ref{app2}.
\end{proof}
Notice that
$$
\big|\bla (b-\langle b \rangle_{K \times V})f_1 \bra_{K \times V}\big| \lesssim \langle  M_{\calD^n, \calD^m}^b f_1 \rangle_{K \times V}
$$
so that \eqref{eq:1WeightForModel} gives that the absolute value of \eqref{eq:fullPOnly} can be dominated with
$$
C([\lambda]_{A_p}) \| M_{\calD^n, \calD^m}^b f_1 \|_{L^p(\lambda)} \|f_2\|_{L^{p'}(\lambda^{1-p'})} \lesssim_{[\mu]_{A_p}, [\lambda]_{A_p}} \|b\|_{\bmo(\nu)} \|f_1\|_{L^p(\mu)}  \|f_2\|_{L^{p'}(\lambda^{1-p'})}.
$$
Here the last estimate used Lemma \ref{prop:bloomforMb}.
We are done with the full paraproducts.

\section{Iterated commutators}
To study the Bloom type inequality for iterated commutators, we also need to consider the commutators of general paraproduct operators that
appear in Section \ref{sec:marprod}.
\begin{lem}\label{lem:commupib}
Let $\pi_b$ be $A_i(b,\cdot)$, $i=1,\cdots, 8$ or $a_j^1(b,\cdot)$, $a_j^2(b,\cdot)$, $j=1,2$. Suppose $b_1 \in \bmo(\nu^{\theta_1})$ and $b_2 \in \bmo(\nu^{\theta_2})$, where $\nu=\mu^{1/p}\lambda^{-1/p}$, $0\le \theta_1,\theta_2\le 1$, $\theta_1+\theta_2=1$ and $\mu, \lambda\in A_p$. Then
\begin{align*}
\|[b_2,\pi_{b_1}]\|_{L^p(\mu)\rightarrow L^p(\lambda)}\lesssim_{[\mu]_{A_p}, [\lambda]_{A_p}}  \|b_1\|_{\bmo(\nu^{\theta_1})}  \|b_2\|_{\bmo(\nu^{\theta_2})}.
\end{align*}
\end{lem}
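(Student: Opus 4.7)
The plan is to adapt the decomposition strategy of Section \ref{sec:marprod} to the outer commutator $[b_2,\pi_{b_1}]$, reducing each piece either to a composition of two paraproducts handled by Lemma \ref{lem:basicAa} through an intermediate weight, or to a remainder controlled by $M^{b_2}_{\calD^n,\calD^m}$ via Proposition \ref{prop:bloomforMb}. First I would set the weight bookkeeping. Define
\[
\sigma_1:=\mu^{\theta_1}\lambda^{\theta_2}, \qquad \sigma_2:=\mu^{\theta_2}\lambda^{\theta_1};
\]
by log-convexity both lie in $A_p$, and a direct computation gives
\[
\nu^{\theta_1} = \sigma_1^{1/p}\lambda^{-1/p} = \mu^{1/p}\sigma_2^{-1/p}, \qquad \nu^{\theta_2} = \mu^{1/p}\sigma_1^{-1/p} = \sigma_2^{1/p}\lambda^{-1/p}.
\]
Consequently Lemma \ref{lem:basicAa} applies on the pairs $(\mu,\sigma_1)$, $(\sigma_1,\lambda)$ for the composition form, and in its dual version on $(\mu,\sigma_2)$ together with $(\lambda^{1-p'},\sigma_2^{1-p'})$ for the dual pairing form. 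In each case the paraproduct with $b_i$ is bounded by its $\bmo(\nu^{\theta_i})$ norm.

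Next I would compute $\langle [b_2,\pi_{b_1}]f_1,f_2\rangle = \langle \pi_{b_1}f_1,b_2 f_2\rangle - \langle \pi_{b_1}(b_2 f_1),f_2\rangle$ and unfold $\pi_{b_1}$ through its defining sum. Depending on the model $\pi_{b_1}$, the function $b_2$ enters inside exactly one pairing of type $\langle b_2 g, h_I\otimes h_J\rangle$, $\bla b_2 g, h_I\otimes \tfrac{1_J}{|J|}\bra$ (or the symmetric variant), or $\langle b_2 g\rangle_{I\times J}$. I would expand these via \eqref{eq:biparEX}, \eqref{eq:1EX}, or \eqref{eq:noEX} respectively. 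The algebraic point is that the ``constant'' term $\langle b_2\rangle_{I\times J}\langle g,\ldots\rangle$ (or its partial-average analogue $\langle b_2\rangle_{I,1}$) produced on both halves of the commutator cancels identically. What remains is a finite sum of (i) compositions $\langle \pi_{b_1}(\pi_{b_2}' f_1),f_2\rangle$ or $\langle \pi_{b_1} f_1, \pi_{b_2}' f_2\rangle$ with $\pi_{b_2}'\in\{A_i(b_2,\cdot),\,a_j^\ell(b_2,\cdot)\}$, and (ii) remainder terms whose summands carry an oscillation $\langle (b_2-\langle b_2\rangle_R)g\rangle_R$ or, in the partial paraproduct case, $\langle(\langle b_2\rangle_{I,1}-\langle b_2\rangle_{I\times J})\langle g,h_I\rangle_1\rangle_J$.

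The compositions in (i) are immediate: H\"older at the appropriate intermediate weight $\sigma_1$ or $\sigma_2$, followed by two applications of Lemma \ref{lem:basicAa} as arranged above, bounds each one by $\|b_1\|_{\bmo(\nu^{\theta_1})}\|b_2\|_{\bmo(\nu^{\theta_2})}\|f_1\|_{L^p(\mu)}\|f_2\|_{L^{p'}(\lambda^{1-p'})}$. The remainders in (ii) are what I expect to be the main technical obstacle, and I would treat them as follows. First absorb the oscillation pointwise: the full-scale variant by $\langle M^{b_2}_{\calD^n,\calD^m} g\rangle_R$, and the partial variant by Lemma \ref{lem:PhiNuLemma} in terms of $\varphi^{\nu^{\theta_2},1}_{\calD^n,\calD^m}g$. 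What is left is a bilinear $b_1$-paraproduct against a modified factor; I would bound it by the weighted $H^1$--$\BMO$ duality \eqref{eq:WeightedH1BMO} at the weight $\nu^{\theta_1}$, reducing matters to an integral of the form $\int S(f)\cdot M(M^{b_2}g)\,\nu^{\theta_1}$ for an appropriate bi-parameter square function $S(f)$. Splitting $\nu^{\theta_1}=\mu^{1/p}\sigma_2^{-1/p}$ (or $\sigma_1^{1/p}\lambda^{-1/p}$, according to which of $f_1,f_2$ carries the $M^{b_2}$) and applying H\"older, the square-function factor is absorbed by Lemma \ref{lem:standardEst}, while the maximal factor is absorbed by the dual form of Proposition \ref{prop:bloomforMb}; the weight constraint of that proposition matches $\nu^{\theta_2}$ exactly under the chosen factorisation, so the estimate closes to the asserted bound.
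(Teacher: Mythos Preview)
Your proposal is correct and follows essentially the same route as the paper: expand the outer commutator via \eqref{eq:biparEX}, \eqref{eq:1EX}, \eqref{eq:noEX}, cancel the $\langle b_2\rangle_{I\times J}$ terms, bound the resulting paraproduct compositions through the intermediate weights $\sigma_1,\sigma_2$ by two applications of Lemma~\ref{lem:basicAa}, and control the oscillation remainder via Lemma~\ref{lem:PhiNuLemma} (or Proposition~\ref{prop:bloomforMb} in the full-average case). The only cosmetic difference is that for the remainder the paper, after absorbing the $b_2$-oscillation into $\varphi^{\nu^{\theta_2},1}f_2$, simply recognises the leftover bilinear form as the $\pi_{b_1}$-pairing with a modified argument and invokes the Bloom bound of Lemma~\ref{lem:basicAa} once more at the intermediate weight, whereas you unwind that bound by hand via \eqref{eq:WeightedH1BMO} and a H\"older split of $\nu^{\theta_1}$; both lead to the same estimate.
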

\begin{proof}
With our existing tools there is no essential difference in the proof for different operators, and we e.g. choose $\pi_{b_1} = A_5(b_1, \cdot)$. We have
$$
\langle A_5(b_1, f_1), f_2\rangle = \sum_{I,J} \Big\langle b_1, \frac{1_I}{|I|} \otimes h_J \Big\rangle \langle f_1, h_I \otimes h_J\rangle \langle f_2, h_I \otimes h_Jh_J\rangle.
$$
Using our decomposition philosophy (treating $h_Jh_J$ as non-cancellative) we get
\begin{equation}\label{eq:eq1}
\begin{split}
\langle [b_2, &A_5(b_1, f_1)], f_2 \rangle = \sum_{i=1}^2 \langle A_5(b_1, f_1), a^1_i(b_2, f_2)\rangle
-  \sum_{i=1}^8 \langle A_5(b_1, A_i(b_2, f_1)),f_2\rangle \\
&+ \sum_{I,J} \Big\langle b_1, \frac{1_I}{|I|} \otimes h_J \Big\rangle \langle f_1, h_I \otimes h_J\rangle
\bla (\langle b_2 \rangle_{I,1} - \langle b_2 \rangle_{I \times J}) \langle f_2, h_{I}\rangle_1, h_Jh_J\bra.
\end{split}
\end{equation}
The first and second term are similar -- we only deal with the second one. We begin with the only reasonable step:
$$
|\langle A_5(b_1, A_i(b_2, f_1)),f_2\rangle| \le \|A_5(b_1, A_i(b_2, f_1))\|_{L^p(\lambda)} \|f_2\|_{L^{p'}(\lambda^{1-p'})}.
$$
We want to use the Bloom inequality for $A_5(b_1, \cdot)$ with $b_1 \in \bmo(\nu^{\theta_1})$. Thus, we write
$$
\nu^{\theta_1} = (\mu^{\theta_1}\lambda^{1-\theta_1})^{1/p} \lambda^{-1/p}, \, \textup{ where } \mu^{\theta_1}\lambda^{1-\theta_1}, \lambda \in A_p,
$$
and get
$$
\|A_5(b_1, A_i(b_2, f_1))\|_{L^p(\lambda)} \lesssim_{[\mu]_{A_p}, [\lambda]_{A_p}} \|b_1\|_{\bmo(\nu^{\theta_1})} \| A_i(b_2, f_1) \|_{L^p(\mu^{\theta_1}\lambda^{1-\theta_1})}.
$$
Then we write
$$
\nu^{\theta_2} = \mu^{1/p} (\mu^{\theta_1}\lambda^{1-\theta_1})^{-1/p}, \, \textup{ where } \mu, \mu^{\theta_1}\lambda^{1-\theta_1} \in A_p,
$$
and similarly get
$$
\| A_i(b_2, f_1) \|_{L^p(\mu^{\theta_1}\lambda^{1-\theta_1})} \lesssim_{[\mu]_{A_p}, [\lambda]_{A_p}} \|b_2\|_{\bmo(\nu^{\theta_2})} \|f_1\|_{L^p(\mu)}.
$$

For the third term in \eqref{eq:eq1} we begin with Lemma \ref{lem:PhiNuLemma}, which gives us that
$$
|\bla (\langle b_2 \rangle_{I,1} - \langle b_2 \rangle_{I \times J}) \langle f_2, h_{I}\rangle_1, h_Jh_J\bra|
\lesssim_{[\mu]_{A_p}, [\lambda]_{A_p}} \| b_2 \|_{\bmo(\nu^{\theta_2})}\Big\langle \varphi_{\calD^n, \calD^m}^{\nu^{\theta_2},1 } f_2, h_I \otimes \frac{1_J}{|J|} \Big\rangle.
$$
Then writing
$$
\nu^{\theta_1} = \mu^{1/p} (\mu^{1-\theta_1} \lambda^{\theta_1})^{-1/p}, \, \textup{ where } \mu, \mu^{1-\theta_1} \lambda^{\theta_1} \in A_p,
$$
we get (again using the known Bloom for $A_5(b_1, \cdot)$) that the absolute value of the third term in \eqref{eq:eq1}
can be dominated in the $\lesssim_{[\mu]_{A_p}, [\lambda]_{A_p}}$ sense by
$$
\|b_1\|_{\bmo(\nu^{\theta_1})} \|f_1\|_{L^p(\mu)} \|\varphi_{\calD^n, \calD^m}^{\nu^{\theta_2},1 } f_2\|_{L^{p'}((\mu^{1-\theta_1} \lambda^{\theta_1})^{1-p'})}.
$$
Then using Lemma \ref{lem:PhiNuLemma} together with the identity
$$
\nu^{\theta_2} = (\lambda^{1-p'})^{1/p'} ( (\mu^{1-\theta_1} \lambda^{\theta_1})^{1-p'} )^{-1/p'},  \, \textup{ where } \lambda^{1-p'}, (\mu^{1-\theta_1} \lambda^{\theta_1})^{1-p'} \in A_{p'},
$$
we get
$$
\|\varphi_{\calD^n, \calD^m}^{\nu^{\theta_2},1 } f_2\|_{L^{p'}((\mu^{1-\theta_1} \lambda^{\theta_1})^{1-p'})} \lesssim \|f_2\|_{L^{p'}(\lambda^{1-p'})}.
$$
We are done.
\end{proof}
\subsection{The shift case}
We show that if $U = U^{k,v}$ is a shift then
\begin{align*}
|\langle [b_2,& [b_1, U]]f_1, f_2\rangle| \\
&\lesssim_{[\mu]_{A_p}, [\lambda]_{A_p}} \|b_1\|_{\bmo(\nu^{\theta_1})}  \|b_2\|_{\bmo(\nu^{\theta_2})} (1+\max(k_i, v_i))^2 \|f_1\|_{L^p(\mu)} \|f_2\|_{L^{p'}(\lambda^{1-p'})}.
\end{align*}
We recall Equation \eqref{eq:Com1ofShift} with $b = b_1$. In the iterated commutator $\langle [b_2, [b_1, U]]f_1, f_2\rangle$ the first term of \eqref{eq:Com1ofShift} leads to the need to study
$$
\langle Uf_1, A_i(b_1, b_2f_2) \rangle - \langle U(b_2f_1), A_i(b_1, f_2)\rangle,
$$
which can be written (by adding and subtracting the obvious term) in the form
$$
-\langle Uf_1, [b_2, A_i(b_1, \cdot)]f_2 \rangle + \langle [b_2,U]f_1, A_i(b_1, f_2)\rangle.
$$
We have using \eqref{eq:1WeightForModel} and Lemma \ref{lem:commupib} that
\begin{align*}
|\langle Uf_1, [b_2, A_i(b_1, \cdot)]f_2\rangle| &\le \|Uf_1\|_{L^p(\mu)} \| [b_2, A_i(b_1, \cdot)]f_2 \|_{L^{p'}(\mu^{1-p'})} \\
& \lesssim_{[\mu]_{A_p}, [\lambda]_{A_p}}  \|b_1\|_{\bmo(\nu^{\theta_1})}  \|b_2\|_{\bmo(\nu^{\theta_2})} \|f_1\|_{L^p(\mu)} \|f_2\|_{L^{p'}(\lambda^{1-p'})}.
\end{align*}
On the other hand, using the known Bloom type inequality for the first order commutator $[b_2,U]$ and also for $A_i(b_1, \cdot)$, we get arguing
analogously as in the proof of Lemma \ref{lem:commupib} that
\begin{align*}
|\langle [b_2,U]f_1,& A_i(b_1, f_2)\rangle| \\
&\lesssim_{[\mu]_{A_p}, [\lambda]_{A_p}}  \|b_1\|_{\bmo(\nu^{\theta_1})}  \|b_2\|_{\bmo(\nu^{\theta_2})} (1+\max(k_i, v_i)) \|f_1\|_{L^p(\mu)} \|f_2\|_{L^{p'}(\lambda^{1-p'})}.
\end{align*}
We have thus handled the contribution of the first term of \eqref{eq:Com1ofShift} to $\langle [b_2, [b_1, U]]f_1, f_2\rangle$.
The contribution of the second term of \eqref{eq:Com1ofShift} to $\langle [b_2, [b_1, U]]f_1, f_2\rangle$ is handled in the same way.

Therefore, we are only left with bounding the contribution of the third term of \eqref{eq:Com1ofShift} to $\langle [b_2, [b_1, U]]f_1, f_2\rangle$, i.e. bounding
$\langle [b_2, U^{b_1}]f_1, f_2\rangle$.
Expanding this as in \eqref{eq:Com1ofShift}, we are left with some
terms that can be handled using the already known Bloom type inequality for $U^{b_1}$ and the Bloom type
inequality for $A_i(b_2, \cdot)$, and also with the new term
\begin{align*}
\langle U^{b_1, b_2}f_1, f_2 \rangle :=
\sum_{\substack{K \in \calD^n \\ V \in \calD^m}}
\sum_{\substack{I_1, I_2 \in \calD^n \\ I_i^{(k_i)} = K}} &
\sum_{\substack{J_1, J_2 \in \calD^m \\ J_j^{(v_j)} = V}} a_{K, V, (I_i), (J_i)} [\langle b_1 \rangle_{I_2 \times J_2} - \langle b_1 \rangle_{I_1 \times J_1}]\\
&[\langle b_2 \rangle_{I_2 \times J_2} - \langle b_2 \rangle_{I_1 \times J_1}]
\langle f_1,  h_{I_1} \otimes  h_{J_1}\rangle
\langle f_2,  h_{I_2} \otimes  h_{J_2}\rangle.
\end{align*}

To finish the shift case, we now bound $U^{b_1, b_2}$. We use \eqref{eq:av-av-split} with $b = b_1$ and $b = b_2$.
When we multiply these together, we get multiple different terms -- we pick two representative ones
\begin{equation}\label{eq:s1}
[\langle b_1 \rangle_{K \times V} -  \langle b_1 \rangle_{K \times J_1}][\langle b_2 \rangle_{K \times J_1} - \langle b_2 \rangle_{I_1 \times J_1}]
\end{equation}
and
\begin{equation}\label{eq:s2}
[\langle b_1 \rangle_{I_2 \times J_2} - \langle b_1 \rangle_{K \times J_2}][\langle b_2 \rangle_{K \times J_1} - \langle b_2 \rangle_{I_1 \times J_1}].
\end{equation}
The point is that in the first case we only have $I_1,J_1$ appearing in both terms (and no $I_2, J_2$), and the other one is a mixed case.
Nevertheless, they can in fact be handled with completely analogous estimates.
Therefore, we only deal with \eqref{eq:s1}.

We use analogous estimates to \eqref{eq:av-av}, which leads to the need to bound
\begin{equation}\label{eq:ShiftAv-Av-iterated}
\begin{split}
\sum_{\substack{K \in \calD^n \\ V \in \calD^m}}
&\sum_{\substack{H \in \calD^m \\ H^{(v_1-h)} = V}}
\frac{\nu^{\theta_1}(K \times H)}{|K \times H|}
\sum_{\substack{L \in \calD^n \\ L^{(k_1-l)}=K}}
 \sum_{\substack{J_1 \in \calD^m \\ J_1^{(h)} = H }}
  \\
& \frac{\nu^{\theta_2}(L \times J_1)}{|L \times J_1|}
\sum_{\substack{I_1, I_2 \in \calD^n \\ I_1^{(l)}=L \\ I_2^{(k_2)} = K}} \sum_{\substack{ J_2 \in \calD^m \\ J_2^{(v_2)} = V}}
\big| a_{K, V, (I_i), (J_i)} \langle f_1,  h_{I_1} \otimes  h_{J_1}\rangle
\langle f_2,  h_{I_2} \otimes  h_{J_2}\rangle\big|,
\end{split}
\end{equation}
where $l \in \{1, \ldots, k_1\}$ and $h \in \{1, \ldots, v_1\}$.
The second line of \eqref{eq:ShiftAv-Av-iterated} is dominated by
$$
 \iint 1_{L \times J_1} M_{\calD^n, \calD^m}(\Delta_{K \times V}^{k_1, v_1} f_1) \langle |\Delta_{K \times V}^{k_2, v_2} f_2| \rangle_{K \times V} \nu^{\theta_2}.
$$
Therefore, continuing in the same way \eqref{eq:ShiftAv-Av-iterated} can be dominated with
$$
\sum_{\substack{K \in \calD^n \\ V \in \calD^m}} \iint M_{\calD^n, \calD^m}(M_{\calD^n, \calD^m}(\Delta_{K \times V}^{k_1, v_1} f_1)\nu^{\theta_2})
M_{\calD^n, \calD^m}(\Delta_{K \times V}^{k_2, v_2} f_2) \nu^{\theta_1}.
$$
Writing $\nu^{\theta_1} = (\mu^{\theta_1}\lambda^{\theta_2})^{1/p} \lambda^{-1/p}$ we can dominate this with
\begin{align*}
\Big\| \Big( \sum_{\substack{K \in \calD^n \\ V \in \calD^m}} [M_{\calD^n, \calD^m}(&M_{\calD^n, \calD^m}(\Delta_{K \times V}^{k_1, v_1} f_1)\nu^{\theta_2})]^2\Big)^{1/2} \Big\|_{L^p(\mu^{\theta_1}\lambda^{\theta_2})}\\
&\times
 \Big\| \Big( \sum_{\substack{K \in \calD^n \\ V \in \calD^m}} [M_{\calD^n, \calD^m}\Delta_{K \times V}^{k_2, v_2} f_2]^2 \Big)^{1/2}\Big\|_{L^{p'}(\lambda^{1-p'})}.
\end{align*}
We can conclude the case \eqref{eq:s1}
by using Fefferman--Stein, square function estimates and also noting that $\nu^{\theta_2 p } \mu^{\theta_1}\lambda^{\theta_2} = \mu$.
We are done with the shift case.
\subsection{The partial paraproduct case}
We show that if $U = U^{k}$ is a partial paraproduct of the form \eqref{eq:FormOfPP}, then we have
\begin{align*}
|\langle [b_2,& [b_1, U]]f_1, f_2\rangle| \\
&\lesssim_{[\mu]_{A_p}, [\lambda]_{A_p}} \|b_1\|_{\bmo(\nu^{\theta_1})}  \|b_2\|_{\bmo(\nu^{\theta_2})} (1+\max(k_1, k_2))^2 \|f_1\|_{L^p(\mu)} \|f_2\|_{L^{p'}(\lambda^{1-p'})}.
\end{align*}
Recall \eqref{eq:partial} with $b=b_1$. The contributions of the first two terms of \eqref{eq:partial} to $\langle [b_2, [b_1, U]]f_1, f_2\rangle$ are handled using the same
general argument that we used with shifts.

We now bound the contribution of the third term of \eqref{eq:partial} to $\langle [b_2, [b_1, U]]f_1, f_2\rangle$, i.e. we bound
$\langle [b_2, U^{b_1}]f_1, f_2\rangle$. Expanding this as in \eqref{eq:partial}, the first two terms can be handled using
the already known Bloom type inequality for $U^{b_1}$ and the Bloom type inequality for $A_i(b_2, \cdot)$ and $a_i^1(b_2, \cdot)$, while
the last term can be handled using Lemma \ref{lem:PhiNuLemma} and the Bloom for $U^{b_1}$. Therefore, we are again facing the need to handle
$U^{b_1, b_2}$. 

Recall that $U=U^k$ is of the form \eqref{eq:FormOfPP}. 
When written out, $U^{b_1, b_2}$ includes terms of the form 
$(\langle b_1 \rangle_{I_2\times V}-\langle b_1 \rangle_{I_1\times V})(\langle b_2 \rangle_{I_2\times V}-\langle b_2 \rangle_{I_1\times V})$.
As before, we split 
$$
\langle b_1 \rangle_{I_2\times V}-\langle b_1 \rangle_{I_1 \times V}
=[\langle b_1 \rangle_{I_2\times V}-\langle b_1 \rangle_{K\times V}]+[\langle b_1 \rangle_{K\times V}-\langle b_1 \rangle_{I_1\times V}],
$$ 
and similarly with the function $b_2$. These multiplied together divides $U^{b_1, b_2}$ into four parts, which are handled in the same way. 
To complement the case we handled with shifts (where we chose \eqref{eq:s1} instead of \eqref{eq:s2}), we choose here the part coming from the terms 
$$
(\langle b_1 \rangle_{K\times V}-\langle b_1 \rangle_{I_1\times V}) (\langle b_2 \rangle_{I_2\times V}-\langle b_2 \rangle_{K\times V}).
$$
We apply the estimate \eqref{eq:av-av}, and see that it suffices to bound the term
\begin{equation}\label{eq:IterPPParticular}
\begin{split}
\sum_{K \in \calD^n } \sum_{\substack{L_1,L_2 \in \calD^n \\ L_i^{(k_i-l_i)}=K}}
\sum_{\substack{I_1,I_2 \in \calD^n \\ I_i^{(l_i)}=L_i}} \sum_{V \in \calD^m}
&\frac{\nu^{\theta_1}(L_1 \times V)}{|L_1 \times V|}\frac{\nu^{\theta_2}(L_2 \times V)}{|L_2 \times V|} \\
&\times \Big|a_{K,V,(I_i)} \Big\langle f_1,  h_{I_1}\otimes  \frac{1_V}{|V|}\Big\rangle
\langle f_2,  h_{I_2} \otimes  h_{V}\rangle \Big|,
\end{split}
\end{equation}
where $l_i \in \{1, \dots, k_i\}$, $i \in \{1, 2\}$.

If $\rho_1, \rho_2 \in A_2(\R^m)$, then
\begin{equation*}
\langle \rho_1\rangle_J \langle \rho_2\rangle_J\le [\rho_1]_{A_2}[\rho_2]_{A_2}\langle \rho_1\rho_2 \rangle_J.
\end{equation*}
Indeed, by H\"older's inequality there holds for any cube $J \subset \R^m$ that
\[
1=\frac 1{|J|}\int_J  (\rho_1 \rho_2)^{\frac 13} \rho_1^{-\frac 13} \rho_2^{-\frac 13}
\le  \langle \rho_1\rho_2\rangle_J^{\frac 13}\langle \rho_1^{-1}\rangle_J^{\frac 13} \langle \rho_2^{-1}\rangle_J^{\frac 13}, 
\]
which combined with $\langle \rho_i ^{-1} \rangle_J \le [\rho_i]_{A_2} \langle \rho_i  \rangle_J^{-1}$ gives the claim. 
If $L_1,L_2 \in \calD^n$ and $V \in \calD^m$, this shows that
\begin{equation}\label{eq:IntoOneWeight}
\begin{split}
\frac{\nu^{\theta_1}(L_1\times V)}{|L_1 \times V|} \frac{\nu^{\theta_2}(L_2\times V)}{|L_2 \times V|} 
&\le [\langle \nu^{\theta_1}\rangle_{L_1,1}]_{A_2}[\langle \nu^{\theta_2}\rangle_{L_2,1}]_{A_2}
\big\langle  \langle \nu^{\theta_1}\rangle_{L_1,1} \langle \nu^{\theta_2}\rangle_{L_2,1}\big\rangle_V\\
&\le [\nu]_{A_2}\big\langle  \langle \nu^{\theta_1}\rangle_{L_1,1} \langle \nu^{\theta_2}\rangle_{L_2,1}\big\rangle_V.
\end{split}
\end{equation}

We turn to \eqref{eq:IterPPParticular}. If $K,L_1,L_2,I_1$ and $I_2$ are as in \eqref{eq:IterPPParticular}, then
applying \eqref{eq:IntoOneWeight} one sees that the inner sum over $V \in \calD^m$ is less than $[\nu]_{A_2}$ multiplied by
\begin{equation*}
\begin{split}
&\int_{\R^m}  \sum_{V \in \calD^m}  
 \big| a_{K, V, (I_i)} \bla \langle f_1, h_{I_1} \rangle_1 \bra_V \langle \langle f_2, h_{I_2}\rangle_1, h_V\rangle\big| 
\frac{1_V}{|V|} \langle \nu^{\theta_1}\rangle_{L_1,1} \langle \nu^{\theta_2}\rangle_{L_2,1} \\
&   \lesssim _{[\langle \nu^{\theta_1}\rangle_{L_1,1} \langle \nu^{\theta_2}\rangle_{L_2,1}]_{A_\infty}}
 \frac{|I_1|^{1/2} |I_2|^{1/2}}{|K|} \int_{\R^m} M_{\calD^m} ( \langle f_1, h_{I_1} \rangle_1 ) M_{\calD^m}( \langle f_2, h_{I_2}\rangle_1)
 \langle \nu^{\theta_1}\rangle_{L_1,1} \langle \nu^{\theta_2}\rangle_{L_2,1},
\end{split}
\end{equation*}
where we used the application of sparse domination as in \eqref{eq:ApplySparse}.
Notice that Theorem 2.1 in \cite{CN} implies that $\langle \nu \rangle_{L_1,1}^{\theta_1} \lesssim_{[\nu]_{A_2}} \langle \nu^{\theta_1} \rangle_{L_1,1}$ (while the other
direction is trivial by H\"older's inequality). This implies that $[\langle \nu^{\theta_1}\rangle_{L_1,1} \langle \nu^{\theta_2}\rangle_{L_2,1}]_{A_2} \le C([\nu]_{A_2})$.

Recall the identity $M_{\calD^m} ( \langle f_i, h_{I_i} \rangle_1 )=\langle \varphi_{\calD^n, \calD^m}^1 f_i , h_{I_i} \rangle_1$.
We sum the last estimate over $K,L_1,L_2,I_1$ and $I_2$, and move the summations inside the integral over $\R^m$. 
Then, for a fixed $x_2 \in \R^m$,
we can use one parameter estimates in the same spirit that we used in connection with \eqref{eq:ShiftAv-Av-iterated}. 
This shows that \eqref{eq:IterPPParticular} is dominated by $C([\mu]_{A_p}, [\lambda]_{A_p})$ multiplied by
\begin{equation*}
 \iint_{\R^{n+m}} \sum_{K \in \calD^n}
M^1_{\calD^n} (  \Delta^1_{K,k_1} \varphi^1_{\calD^n, \calD^m} f_1) 
M^1_{\calD^n}(M^1_{\calD^n} (  \Delta^1_{K,k_2} \varphi^1_{\calD^n, \calD^m} f_2) \nu^{\theta_2}) \nu^{ \theta_1}.
\end{equation*}
From here the estimate can be concluded by familiar steps. This ends our study of $U^{b_1, b_2}$.

To finish our treatment of partial paraproducts, we need to bound the contribution of the last term of \eqref{eq:partial} to $\langle [b_2, [b_1, U]]f_1, f_2\rangle$. Expanding using our usual rules leads us to the following sum of terms
\begin{align*}
&\sum_{i=1}^8\sum_{\substack{K \in \calD^n \\ V \in \calD^m}} \sum_{\substack{I_1, I_2 \in \calD^n \\ I_1^{(k_1)}= I_2^{(k_2)} = K}} a_{K, V, (I_i)}
\bla (\langle b_1 \rangle_{I_1, 1} - \bla b_1 \rangle_{I_1 \times V})\langle f_1, h_{I_1}\rangle_1 \bra_V\langle A_i(b_2,f_2),  h_{I_2} \otimes  h_{V}\rangle\\
&-\sum_{i=1}^2\sum_{\substack{K \in \calD^n \\ V \in \calD^m}} \sum_{\substack{I_1, I_2 \in \calD^n \\ I_1^{(k_1)}= I_2^{(k_2)} = K}} a_{K, V, (I_i)}
\bla (\langle b_1 \rangle_{I_1, 1} - \bla b_1 \rangle_{I_1 \times V})\langle a_i^1(b_2,f_1), h_{I_1}\rangle_1 \bra_V\langle f_2,  h_{I_2} \otimes  h_{V}\rangle\\
&+\sum_{\substack{K \in \calD^n \\ V \in \calD^m}} \sum_{\substack{I_1, I_2 \in \calD^n \\ I_1^{(k_1)}= I_2^{(k_2)} = K}} a_{K, V, (I_i)}
\bla (\langle b_1 \rangle_{I_1, 1} - \bla b_1 \rangle_{I_1 \times V})( \bla b_2 \rangle_{I_1 \times V}-\langle b_2 \rangle_{I_1, 1} )\langle f_1, h_{I_1}\rangle_1 \bra_V\\
&\hspace{12cm}\times\langle f_2,  h_{I_2} \otimes  h_{V}\rangle\\
&+\sum_{\substack{K \in \calD^n \\ V \in \calD^m}} \sum_{\substack{I_1, I_2 \in \calD^n \\ I_1^{(k_1)}= I_2^{(k_2)} = K}} a_{K, V, (I_i)} [\bla b_2 \rangle_{I_2 \times V}-\bla b_2 \rangle_{I_1 \times V}]
\bla (\langle b_1 \rangle_{I_1, 1} - \bla b_1 \rangle_{I_1 \times V})\langle f_1, h_{I_1}\rangle_1 \bra_V\\
&\hspace{12cm}\times\langle f_2,  h_{I_2} \otimes  h_{V}\rangle\\
&= I + II + III + IV.
\end{align*}
The estimates for $I$ and $II$ follow quite directly from \eqref{eq:1WeightForModel} and lemmas \ref{lem:basicAa} and \ref{lem:PhiNuLemma}.
The term $IV$ can be handled using Lemma \ref{lem:PhiNuLemma} and the Bloom type inequality of $U^{b_2}$. The term $III$ requires a more careful treatment, which we will now proceed to give.

We will use the following one parameter estimate 
\begin{align*}
\frac 1{|J|}\int_J |b_1-\langle b_1 \rangle_J| &|b_2-\langle b_2 \rangle_J|  |g|\lesssim [w_1]_{A_\infty} [w_2]_{A_\infty} \|b_1\|_{\BMO(w_1)}\|b_2\|_{\BMO(w_2)}\\
&\times\frac 1{|J|}\int_{ J} [M_{\calD^m}(M_{\calD^m}(g)w_2)w_1+M_{\calD^m}(M_{\calD^m}(g)w_1)w_2].
\end{align*}
We can prove this by using \eqref{eq:lor} again:
\begin{align*}
\int_J &|b_1-\langle b_1 \rangle_J| |b_2-\langle b_2 \rangle_J|  |g|\\
&\lesssim  \sum_{\substack{P\in \mathcal S_1\\ P\subset J}}\sum_{\substack{Q\in \mathcal S_2\\ Q\subset J}} \big\langle|b_1-\langle b_1\rangle_P|\big\rangle_P\big\langle|b_2-\langle b_2\rangle_Q|\big\rangle_Q \int_{P\cap Q} |g|\\
&\le \|b_1\|_{\BMO(w_1)}\|b_2\|_{\BMO(w_2)}\sum_{\substack{P\in \mathcal S_1\\ P\subset J}}\sum_{\substack{Q\in \mathcal S_2\\ Q\subset P}} \langle w_1\rangle_P\langle w_2\rangle_Q \int_{ Q} |g|\\
&+\|b_1\|_{\BMO(w_1)}\|b_2\|_{\BMO(w_2)}\sum_{\substack{Q\in \mathcal S_2\\ Q\subset J}} \sum_{\substack{P\in \mathcal S_1\\ P\subset Q}}\langle w_1\rangle_P\langle w_2\rangle_Q \int_{ P} |g|\\
&\lesssim [w_2]_{A_\infty} \|b_1\|_{\BMO(w_1)}\|b_2\|_{\BMO(w_2)}\sum_{\substack{P\in \mathcal S_1\\ P\subset J}}\langle w_1\rangle_P\int_{ P} M_{\calD^m}(g)w_2\\
&+[w_1]_{A_\infty}\|b_1\|_{\BMO(w_1)}\|b_2\|_{\BMO(w_2)}\sum_{\substack{Q\in \mathcal S_2\\ Q\subset J}} \langle w_2\rangle_Q \int_{ Q} M_{\calD^m}(g)w_1.
\end{align*}
The desired one parameter estimate follows from this as previously. Define now
\[
\varphi_{\calD^n, \calD^m}^{\nu_1, \nu_2,1} f_1=\sum_{I\in \calD^n} h_I \otimes M_{\calD^m}(M_{\calD^m}(\langle f_1, h_{I}\rangle_1)\langle\nu_1 \rangle_{I,1})\langle\nu_2\rangle_{I,1},
\]
and notice that we get
\begin{align*}
|\bla (\langle b_1 \rangle_{I_1, 1}& - \bla b_1 \rangle_{I_1 \times V})( \bla b_2 \rangle_{I_1 \times V}-\langle b_2 \rangle_{I_1, 1} )\langle f_1, h_{I_1}\rangle_1 \bra_V|\\
&\lesssim_{[\mu]_{A_p}, [\lambda]_{A_p}} \|b_1\|_{\bmo(\nu^{\theta_1})}  \|b_2\|_{\bmo(\nu^{\theta_2})} \Big\langle \varphi_{\calD^n, \calD^m}^{\nu^{\theta_1}, \nu^{\theta_2},1} f_1+\varphi_{\calD^n, \calD^m}^{\nu^{\theta_2}, \nu^{\theta_1},1} f_1, h_{I_1} \otimes \frac{1_V}{|V|} \Big\rangle.
\end{align*}
It is not hard to show (similarly as in Lemma \ref{lem:PhiNuLemma}) that
\begin{equation}\label{eq:iteratedvarphi}
\big\| \varphi_{\calD^n, \calD^m}^{\nu^{\theta_1}, \nu^{\theta_2},1} f_1\big\|_{L^p(\lambda)} \lesssim_{[\mu]_{A_p}, [\lambda]_{A_p}} \|f_1\|_{L^p(\mu)}. 
\end{equation}
This, together with \eqref{eq:1WeightForModel}, ends our treatment of the term $III$. We are done with the partial paraproducts.

\subsection{The full paraproduct case}
The only term that arises here, which cannot be handled using exactly the same arguments that we have seen above with shifts and partial paraproducts, is
\begin{align*}
\sum_{\substack{K\in \calD^n \\  V\in \calD^m}} 
  a_{K,V} \bla  (b_1 - \langle b_1\rangle_{K\times V})(b_2 - \langle b_2 \rangle_{K\times V}) f_1 \bra_{K \times V}  \langle f_2,  {h}_{K}\otimes {h}_{V} \rangle.
\end{align*}
The natural maximal function is now
\[
M_{\calD^n, \calD^m}^{b_1,b_2} f=\sup_{R \in \calD^n \times \calD^m} \frac{1_R}{|R|}\int_R |b_1-\langle b_1 \rangle_R| |b_2-\langle b_2 \rangle_R| |f|.
\]
We have
\[
\| M_{\calD^n, \calD^m}^{b_1, b_2}\|_{L^p(\mu)\rightarrow L^p(\lambda)}\lesssim_{[\mu]_{A_p}, [\lambda]_{A_p}} \|b_1\|_{\bmo(\nu^{\theta_1})}\|b_2\|_{\bmo(\nu^{\theta_2})}
\]
as in Proposition \ref{prop:bloomforMb}. Indeed, using the same notation, and noticing that
$\mu_0^{1-q'},\lambda^{1-q'}\in A_{q'}\subset A_{q'/\theta}$ for any $\theta\in [0,1]$, we then have, for $\theta_1\in (0,1)$ and $x\in R$, that
\begin{align*}
& \frac 1{|R|}\int_R |b_1-\langle b_1 \rangle_R|  |b_2-\langle b_2 \rangle_R|  |f|\\
&\le\Big( \frac 1{|R|}\int_R |f|^q \mu_0\Big)^{\frac 1q}  \Big(\frac 1{|R|}\int_R |b_1-\langle b_1 \rangle_R|^{\frac{q'}{\theta_1}} \mu_0^{1-q'}\Big)^{\frac {\theta_1}{q'}} \Big(\frac 1{|R|}\int_R |b_2 -\langle b_2  \rangle_R|^{\frac{q'}{\theta_2}}\mu_0^{1-q'}\Big)^{\frac {\theta_2}{q'}}\\
&\lesssim_{[\mu]_{A_p}, [\lambda]_{A_p}} \Big( \frac 1{|R|}\int_R |f|^q \mu_0\Big)^{\frac 1q}  \Big( \frac{ \lambda^{1-q'}(R)}{|R|} \Big)^{\frac {\theta_1}{q'}}\|b_1\|_{\bmo(\nu^{\theta_1})}\Big( \frac{ \lambda^{1-q'}(R)}{|R|} \Big)^{\frac {\theta_2}{q'}}\|b\|_{\bmo(\nu^{\theta_2})}\\
&\lesssim \|b_1\|_{\bmo(\nu^{\theta_1})} \|b\|_{\bmo(\nu^{\theta_2})}\Big( \frac 1{\lambda(R)}  \int_R |f |^{q}\mu_0 \Big)^{\frac 1{q}}.
\end{align*}
After this we can conclude as previously.
We still comment on the case $\theta_1=0$. Let $s=q'/{t'}$, where $t = t([\lambda]_{A_p}) \in (q, \frac{p+q}{2q'}+1)$ (these restrictions imply that $s > 1$ and $p/st > 1$)
will be chosen later to be close enough to $q$. Set $\mu_1^{1-t'}=\mu_0^{1-q'}$. For $x\in R$ we have
\begin{align*}
& \frac 1{|R|}\int_R |b_1-\langle b_1 \rangle_R|  |b_2-\langle b_2 \rangle_R|  |f|\\
&\le \Big(\frac 1{|R|}\int_R |b_1-\langle b_1\rangle_R|^{s'} \Big)^{\frac 1{s'}} \Big( \frac 1{|R|}\int_R |b_2-\langle b_2\rangle_R|^{st'}\mu_1^{1-t'}\Big)^{\frac 1{st'}}\Big( \frac 1{|R|}\int_R |f|^{st}\mu_1 \Big)^{\frac 1{st}}\\
&\lesssim_{[\mu]_{A_p}, [\lambda]_{A_p}}\|b_1\|_{\bmo} \|b_2\|_{\bmo(\nu)}\Big(\frac{\lambda^{1-q'}(R)}{|R|}\Big)^{\frac 1{q'}}\Big( \frac 1{|R|}\int_R |f|^{st}\mu_1 \Big)^{\frac 1{st}}\\
&\le \|b_1\|_{\bmo} \|b_2\|_{\bmo(\nu)} [M_{\calD^n, \calD^m,\lambda}(M_{\calD^n,\calD^m}(f^{st}\mu_1)^{\frac q{st}} \lambda^{-1})(x)]^{\frac 1q}.
\end{align*}
To finish, we prove that $\lambda^{-\frac pq+1}\in A_{\frac p{st}}$. 
We first prove $\lambda^{-\frac pq+1}\in A_{\frac p{q}}$. We can take $q$ close enough to $p$ such that $(p/q)'>p$.
Using H\"older's inequality
with the exponent $u$, where $1/u := (p-1)(p/q-1) < (p-1)(p'-1) = 1$, we get
\begin{align*}
\frac 1{|R|}\int_R \lambda^{-\frac pq+1} \Big(\frac 1{|R|}\int_R \lambda  \Big)^{\frac pq-1}\le \Big(\frac 1{|R|}\int_R \lambda^{-\frac1{p-1}} \Big)^{ (p-1)(\frac pq-1)} \Big(\frac 1{|R|}\int_R \lambda  \Big)^{\frac p{q}-1} \le [\lambda]_{A_p}^{\frac pq-1}.
\end{align*}
Finally, we choose $t$ very close to $q$ -- notice that if $t \to q$ then $s \to 1$ and so $p/{st}\rightarrow p/q$.
Using the open property of $A_{p/q}$ weights we conclude that $\lambda^{-\frac pq+1}\in A_{\frac p{st}}$.
Using this we can end the proof.

\begin{appendix}
\section{Embedding $\bmo(w) \subset \BMO_{\textup{prod}}(w)$}\label{app1}
We give a proof of the embedding -- we thank Prof. T. Hyt\"onen for giving us an outline of the proof.
\begin{prop}
We have $\|b\|_{\BMO_{\textup{prod}}^{\calD^n, \calD^m}(w)} \lesssim_{[w]_{A_2}} \|b\|_{\bmo_{\calD^n, \calD^m}(w)}$ if $w \in A_2(\R^n \times \R^m)$.
\end{prop}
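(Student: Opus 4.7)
The plan is to reduce the bi-parameter claim to a slice-wise one-parameter weighted $\BMO$ estimate, then package the resulting sums via a Journé-type covering argument that exploits the $A_\infty$ strong maximal function bound of Appendix~\ref{app2}.

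By unpacking the definition of $\BMO_{\textup{prod}}^{\calD^n,\calD^m}(w)$, it suffices to show that for every admissible $\Omega$
$$
S_\Omega(b) := \sum_{I \times J \subset \Omega} |\langle b, h_I \otimes h_J \rangle|^2 \langle w \rangle_{I \times J}^{-1} \lesssim_{[w]_{A_2}} \|b\|_{\bmo(w)}^2 \, w(\Omega).
$$
For each $I \in \calD^n$ I would introduce the slice function $F_I(x_2) := \langle b, h_I \rangle_1(x_2)$ and the averaged weight $w_I(x_2) := \langle w \rangle_{I,1}(x_2)$. Standard facts give $w_I \in A_2(\R^m)$ with $[w_I]_{A_2} \le [w]_{A_2}$. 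Using only the mean-zero property of $h_I$ and the uniform slice bound $\esssup_{y_1}\|b(y_1,\cdot)\|_{\BMO(w(y_1,\cdot))} \le \|b\|_{\bmo(w)}$ from Section~\ref{ss:bmo}, a direct computation gives the \emph{slice $\BMO$ bound}
$$
\|F_I\|_{\BMO_{\calD^m}(w_I)} \le |I|^{1/2} \|b\|_{\bmo(w)}.
$$

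Given the slice bound, the standard one-parameter weighted dyadic $\BMO$-sum characterisation (valid for $A_2$ weights, of the same type that underlies the one-parameter form of \eqref{eq:WeightedH1BMO}) supplies, for any dyadic cube $J_0 \subset \R^m$,
$$
\sum_{J \subset J_0} |\langle F_I, h_J\rangle|^2 \langle w_I \rangle_J^{-1} \lesssim_{[w]_{A_2}} |I|\, \|b\|_{\bmo(w)}^2\, w_I(J_0).
$$
Applied with $J_0$ ranging over the maximal dyadic subcubes of $U(I) := \bigcup\{J : I \times J \subset \Omega\}$, and using $\langle F_I, h_J\rangle = \langle b, h_I \otimes h_J\rangle$, $\langle w_I\rangle_J = \langle w\rangle_{I \times J}$, and $|I|\, w_I(U(I)) = w(I \times U(I))$, this yields the \emph{per-$I$ bound}
$$
\sum_{J\,:\,I \times J \subset \Omega} |\langle b, h_I\otimes h_J\rangle|^2 \langle w\rangle_{I \times J}^{-1} \lesssim \|b\|_{\bmo(w)}^2 \, w(I \times U(I)).
$$

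The remaining task is to sum this over $I \in \calD^n$, and this is where I expect the main obstacle to lie: the naive sum $\sum_I w(I \times U(I))$ diverges, because the strips $I \times U(I)$ accumulate through scales (indeed $U(I) \supseteq U(I')$ whenever $I \subsetneq I'$, so the same mass is counted at every ancestor). I would handle this by a Journé-type reorganisation tailored to the dyadic product setting: restrict the $I$-sum to the ``frontier'' cubes where $U(I) \subsetneq U(I^{(1)})$, telescope the remaining contributions, and bound what is left by $\lesssim w(\widetilde \Omega)$ for a suitable strong-maximal-function enlargement $\widetilde \Omega$ of $\Omega$. Fefferman's $A_\infty$ estimate for the strong maximal function, proved in our setting in Appendix~\ref{app2}, then gives $w(\widetilde \Omega) \lesssim w(\Omega)$, which closes the estimate. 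Steps 1--3 are essentially mechanical consequences of one-parameter weighted $\BMO$ theory plus the cancellation of $h_I$; it is this final geometric/combinatorial step, where the bi-parameter $A_\infty$ geometry enters, that is the genuine difficulty.
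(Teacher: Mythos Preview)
Your Steps 1--3 are correct, but Step 4 has a genuine, unfixable gap. Once you have applied the per-$I$ bound in Step 3, the right-hand sides $w(I\times U(I))$ simply cannot be summed: take $\Omega=I_0\times J_0$ a single rectangle, so that $U(I)=J_0$ for every dyadic $I\subset I_0$, and then
\[
\sum_{I} w(I\times U(I)) = \sum_{k\ge 0}\;\sum_{\substack{I\subset I_0 \\ \ell(I)=2^{-k}\ell(I_0)}} w(I\times J_0)
= \sum_{k\ge 0} w(I_0\times J_0)=\infty.
\]
Your ``frontier'' reorganisation does not help: in this example the only frontier cube is $I_0$, but restricting the $I$-sum to $I_0$ throws away the genuine Haar contributions $|\langle b,h_I\otimes h_J\rangle|^2\langle w\rangle_{I\times J}^{-1}$ for all proper $I\subsetneq I_0$, which are certainly nonzero in general; and if instead you try to telescope on the right-hand side $w(I\times U(I))$ (via $U(I)=\bigsqcup_{I'\supseteq I}D(I')$), the rearranged sum is still $\sum_{I'}\sum_{k\ge 0} w(I'\times D(I'))=\infty$. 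The underlying problem is that your Step 3 already discards the orthogonality of $\{h_I\}_I$ in the first variable, and no covering lemma or $A_\infty$ estimate can restore it.

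The paper's proof is completely different and avoids this trap by working through duality so that the $I$-orthogonality is preserved as a square function. Using Kahane--Khintchine and the one-parameter weighted square function \emph{lower} bound $\|g\|_{L^p(\rho)}\lesssim_{[\rho]_{A_\infty}}\|(\sum_I|\Delta_I g|^2)^{1/2}\|_{L^p(\rho)}$ (applied in $x_1$ to $\sum_J\epsilon_J\Delta_J^2 f$), one gets
\[
\Big\|\Big(\sum_{I,J}|\Delta_{I\times J} f|^2\Big)^{1/2}\Big\|_{L^1(w)}
\gtrsim_{[w]_{A_2}}
\Big\|\Big(\sum_{J}|\Delta_J^2 f|^2\Big)^{1/2}\Big\|_{L^1(w)}.
\]
Then the one-parameter weighted $H^1$--$\BMO$ duality of Wu, applied slice-wise in $x_2$ with the uniform $\BMO(w(x_1,\cdot))$ control you noted, gives $|\langle b,f\rangle|\lesssim\|b\|_{\bmo(w)}\|(\sum_{I,J}|\Delta_{I\times J}f|^2)^{1/2}\|_{L^1(w)}$. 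A standard dualisation against $f_a=\sum_{I\times J\subset\Omega}a_{I,J}h_I\otimes h_J$ and Cauchy--Schwarz finish. The $I$-sum is never decoupled; it stays inside the square function.
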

\begin{proof}
First, using the $\ell^2$-valued Kahane--Khintchine inequality notice that
\begin{align*}
\Big( \mathop{\sum_{I \in \calD^n}}_{J \in \calD^m} |\Delta_{I \times J} f(x)|^2 \Big)^{1/2} 
&= \Big( \E \sum_{I} \Big |  \sum_J \epsilon_J \Delta_{I \times J} f(x) \Big|^2\Big)^{1/2} \\
&\sim \E  \Big( \sum_{I} \Big | \Delta_I \Big( \sum_J \epsilon_J \Delta_J^2 f(\cdot, x_2)\Big)(x_1) \Big|^2 \Big)^{1/2}.
\end{align*}
Taking $L^1(w)$ norm, and using the known (see \cite{CWW}) lower bound
$$
\|g\|_{L^p(\rho)} \lesssim_{[\rho]_{A_{\infty}}} \Big\|\Big( \sum_I |\Delta_I g|^2 \Big)^{1/2}\Big\|_{L^p(\rho)}, \qquad p \in (0, \infty),
$$
we get
\begin{align*}
\Big\| \Big( \mathop{\sum_{I \in \calD^n}}_{J \in \calD^m} |\Delta_{I \times J} f|^2 \Big)^{1/2} \Big\|_{L^1(w)} 
&\gtrsim_{[w]_{A_2}} \iint_{\R^{n+m}} \E' \Big| \sum_J \epsilon_J' \Delta_J^2 f(x_1, x_2)  \Big| w(x_1,x_2) \ud x_1 \ud x_2 \\
&\sim \Big\| \Big( \sum_{J \in \calD^m} |\Delta_J^2 f|^2 \Big)^{1/2} \Big\|_{L^1(w)}.
\end{align*}
Next, using the weighted one parameter $H^1-\BMO$ duality result (i.e. the one parameter analog of \eqref{eq:WeightedH1BMO}), see Wu \cite{Wu}, we get
\begin{equation}\label{eq:bfest}
\begin{split}
\Big| \iint_{\R^{n+m}} b(x) f(x) \ud x \Big| &\lesssim_{[w]_{A_2}} \|b\|_{\bmo_{\calD^n, \calD^m}(w)} \Big\| \Big( \sum_{J \in \calD^m} |\Delta_J^2 f|^2 \Big)^{1/2} \Big\|_{L^1(w)} \\
&\lesssim_{[w]_{A_2}} \|b\|_{\bmo_{\calD^n, \calD^m}(w)}\Big\| \Big( \mathop{\sum_{I \in \calD^n}}_{J \in \calD^m} |\Delta_{I \times J} f|^2 \Big)^{1/2} \Big\|_{L^1(w)}.
\end{split}
\end{equation}
Borrowing a calculation from \cite{HPW} we can conclude the proof. Indeed, for all $\Omega$ we have
\begin{align*}
\Big(& \mathop{\sum_{I \in \calD^n, J \in \calD^m}}_{I \times J \subset \Omega} |\langle b, h_I \otimes h_J \rangle|^2 \langle w \rangle_{I \times J}^{-1} \Big)^{1/2} \\
 &= \sup \Big\{ \Big| \mathop{\sum_{I \in \calD^n, J \in \calD^m}}_{I \times J \subset \Omega} \langle b, h_I \otimes h_J\rangle a_{I,J} \Big| \colon \mathop{\sum_{I \in \calD^n, J \in \calD^m}}_{I \times J \subset \Omega} |a_{I,J}|^2 \langle w \rangle_{I \times J} = 1\Big\}.
\end{align*}
Given such $a = (a_{I,J})$ define $f_a = \sum_{\substack{I, J \\ I \times J \subset \Omega}} a_{I,J} h_I \otimes h_J$. Then we have
using \eqref{eq:bfest} and H\"older's inequality that
\begin{align*}
\Big| &\mathop{\sum_{I \in \calD^n, J \in \calD^m}}_{I \times J \subset \Omega} \langle b, h_I \otimes h_J\rangle a_{I,J} \Big| = |\langle b, f_a\rangle| \\
&\lesssim_{[w]_{A_2}} \|b\|_{\bmo_{\calD^n, \calD^m}(w)}\Big\| \Big( \mathop{\sum_{I \in \calD^n, J \in \calD^m}}_{I \times J \subset \Omega} |a_{I,J}|^2 \frac{1_I \otimes 1_J}{|I||J|} \Big)^{1/2} \Big\|_{L^1(w)} \\
&\le \|b\|_{\bmo_{\calD^n, \calD^m}(w)} w(\Omega)^{1/2} \Big( \mathop{\sum_{I \in \calD^n, J \in \calD^m}}_{I \times J \subset \Omega} |a_{I,J}|^2 \langle w \rangle_{I \times J} \Big)^{1/2} = \|b\|_{\bmo_{\calD^n, \calD^m}(w)}w(\Omega)^{1/2}.
\end{align*}
We are done. 
\end{proof}
\begin{rem}
Notice that actually the proof works just by assuming that $w$ is uniformly in $A_{\infty}(\R^n)$ and $A_{\infty}(\R^m)$, and that we
also do not need the full strength of the little BMO assumption: we can e.g. use $\esssup_{x_1 \in \R^n} \, \|b(x_1, \cdot)\|_{\BMO_{\calD^m}(w(x_1, \cdot))}$
instead of the little BMO norm.
\end{rem}

\section{Boundedness of the strong maximal function}\label{app2}
We give a proof of the following variant of a result of Fefferman \cite{Fe3}. The proof is quite clear in the dyadic setting.
We note that we get a polynomial dependence on $[\lambda]_{A_p}$, while in Barron--Pipher \cite{BP} there seemed to be some
exponential dependence.
\begin{prop}
Let $p \in (1,\infty)$ and $\lambda \in A_p(\R^n \times \R^m)$. Then for $s \in (1,\infty)$ we have
$$
\| M_{\calD^n, \calD^m, \lambda} f \|_{L^s(\lambda)} \lesssim [\lambda]_{A_p}^{1+1/s} \|f\|_{L^s(\lambda)}.
$$
\end{prop}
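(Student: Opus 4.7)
The plan is to bound $M^\lambda := M_{\calD^n, \calD^m, \lambda}$ by iterating one-variable weighted maximal operators taken with respect to slices of $\lambda$, and to absorb the loss from iterating into the $[\lambda]_{A_p}$-constant.

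Define the two partial slice-max operators
\[
M^1 g(x_1,x_2) := \sup_{I \ni x_1} \frac{\int_I g(y_1,x_2)\,\lambda(y_1,x_2)\,dy_1}{\int_I \lambda(y_1,x_2)\,dy_1}, \qquad
M^2 g(x_1,x_2) := \sup_{J \ni x_2} \frac{\int_J g(x_1,y_2)\,\lambda(x_1,y_2)\,dy_2}{\int_J \lambda(x_1,y_2)\,dy_2},
\]
where the sups run over $\calD^n$ and $\calD^m$ respectively. For each fixed $x_1$, the operator $g \mapsto M^2 g(x_1,\cdot)$ is the dyadic martingale maximal function on $\R^m$ with respect to the measure $\lambda(x_1,\cdot)\,dy_2$, so Doob's $L^s$-inequality (universal in the measure) yields $\|M^2 g(x_1,\cdot)\|_{L^s(\lambda(x_1,\cdot))} \le s'\,\|g(x_1,\cdot)\|_{L^s(\lambda(x_1,\cdot))}$; integrating in $x_1$ gives $\|M^i g\|_{L^s(\lambda)} \le s'\,\|g\|_{L^s(\lambda)}$ for $i=1,2$, with no $A_p$-dependence. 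Hence $\|M^1 M^2 g\|_{L^s(\lambda)} \le (s')^2\,\|g\|_{L^s(\lambda)}$.

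The remaining work is to prove a pointwise comparison of the form $M^\lambda f \lesssim [\lambda]_{A_p}^{1+1/s}\, M^1 M^2 f$. Fixing a dyadic rectangle $R = I \times J \ni (x_1,x_2)$, the definition of $M^2$ together with Fubini gives the natural first bound
\[
\frac{1}{\lambda(R)}\int_R |f|\,\lambda \le \frac{\int_I M^2 f(y_1,x_2)\,\mu_J(y_1)\,dy_1}{\mu_J(I)}, \qquad \mu_J(y_1) := \int_J \lambda(y_1,y_2)\,dy_2.
\]
Writing $g(y_1) := M^2 f(y_1,x_2)$, the right-hand side is a $\mu_J$-weighted average of $g$ on $I$, whereas $M^1 g(x_1,x_2)$ is the $\lambda(\cdot,x_2)$-weighted maximal average of $g$ on $I$. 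The heart of the matter, and the main obstacle, is a uniform (in $J\ni x_2$) comparison
\[
\frac{\int_I g\,\mu_J}{\int_I \mu_J} \lesssim [\lambda]_{A_p}^{1+1/s}\;\frac{\int_I g\,\lambda(\cdot,x_2)}{\int_I \lambda(\cdot,x_2)}
\]
between these two weighted averages of $g\ge 0$ on $I$.

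To handle this comparison I would use that the bi-parameter $A_p$ condition says exactly that $\lambda(\cdot,x_2) \in A_p(\R^n)$ essentially uniformly in $x_2$, and $\lambda(y_1,\cdot) \in A_p(\R^m)$ essentially uniformly in $y_1$, each with constant $\le [\lambda]_{A_p}$. Using this, $y_1 \mapsto \mu_J(y_1)/|J| = \langle \lambda(y_1,\cdot)\rangle_J$ is in $A_p(\R^n)$ uniformly in $J$ with the same constant $\le [\lambda]_{A_p}$, and the $A_p$ characterization rewritten on $R$ gives $\langle \mu_J\rangle_I = \langle \lambda\rangle_R$ and $\langle \lambda(\cdot,x_2)\rangle_I \ge [\lambda]_{A_p}^{-1}\,\langle \lambda(\cdot,x_2)^{1-p'}\rangle_I^{1-p}$; a Hölder-split of the numerator on the left combined with the one-parameter reverse-Hölder inequality for $\lambda$ along $J$ (whose exponent depends polynomially on $[\lambda]_{A_p}$) produces one factor of $[\lambda]_{A_p}$ from the $A_p$ ratio and an additional factor of $[\lambda]_{A_p}^{1/s}$ from the reverse-Hölder self-improvement of the slice $\lambda$ in the $y_2$ direction. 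Combined with the two Doob bounds from the first paragraph, this yields $\|M^\lambda f\|_{L^s(\lambda)} \lesssim [\lambda]_{A_p}^{1+1/s}\,\|f\|_{L^s(\lambda)}$, as desired.
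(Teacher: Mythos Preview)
Your approach has a genuine gap at the ``heart of the matter'' step. The comparison
\[
\frac{\int_I g\,\mu_J}{\int_I \mu_J} \lesssim [\lambda]_{A_p}^{1+1/s}\,\frac{\int_I g\,\lambda(\cdot,x_2)}{\int_I \lambda(\cdot,x_2)}
\]
is asked to hold for the nonnegative function $g = M^2 f(\cdot,x_2)$, which can be essentially arbitrary. But an inequality $\langle g\rangle_I^{w_1} \le C\,\langle g\rangle_I^{w_2}$ valid for all $g\ge 0$ is equivalent (test on $g=1_E$ with $|E|\to 0$) to the pointwise density bound $w_1(y_1)/w_1(I) \le C\,w_2(y_1)/w_2(I)$ on $I$. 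Here that reads
\[
\frac{\int_J \lambda(y_1,y_2)\,dy_2}{\lambda(y_1,x_2)} \;\lesssim\; [\lambda]_{A_p}^{1+1/s}\,\frac{\lambda(I\times J)}{\int_I \lambda(y_1,x_2)\,dy_1}
\qquad\text{for a.e. }y_1\in I,
\]
uniformly in $I,J,x_2$. This is an $A_1$-type control of the ratio $\mu_J/\lambda(\cdot,x_2)$, and no combination of H\"older splits and one-parameter reverse H\"older for the slices will manufacture it from a mere $A_p$ assumption: reverse H\"older improves integrability of a weight, not pointwise comparability of two different weights. Your sketch (``a H\"older-split \dots produces one factor \dots and an additional factor \dots'') does not address this, and the appearance of the exponent $1/s$ in a purely pointwise inequality that has no $s$ in it is a warning sign.

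This is precisely the obstruction the paper alludes to when it says the boundedness of $M_{\calD,\lambda}$ is ``non-trivial as $\lambda$ is not of product form''. If $\lambda = u\otimes v$, your iteration works because then $\mu_J/\lambda(\cdot,x_2)$ is constant in $y_1$; for genuinely bi-parameter $\lambda$ the iterated one-variable maximal operators $M^1 M^2$ do not dominate $M^\lambda$ pointwise with any $A_p$-controlled constant. The paper instead follows Fefferman's route: it proves the weak-type $(s,s)$ bound directly by running the C\'ordoba--Fefferman selection algorithm on the rectangles $R_j$ with $\langle |f|\rangle_{R_j}^\lambda > \alpha$, extracting a sparse subfamily, and combining two key inequalities (a mass comparison and an $\ell^{s'}$ overlap bound for the selected rectangles) obtained from the one-parameter $A_p$ property of the slices. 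Interpolation then gives the strong bound with the stated constant $[\lambda]_{A_p}^{1+1/s}$.
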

\begin{proof}
Write $\calD = \calD^n \times \calD^m$.
By interpolation it is enough to prove $\| M_{\calD, \lambda} f \|_{L^{s,\infty}(\lambda)} \lesssim [\lambda]_{A_p}^{1+1/s} \|f\|_{L^s(\lambda)}$.
Fix $f$ and $\alpha > 0$, and set $\Omega = \Omega(\alpha) = \{ M_{\calD, \lambda} f > \alpha\}$. Write $\Omega = \bigcup_{j=1}^{\infty} R_j$ for some
rectangles $R_j \in \calD$
with $\langle |f| \rangle_{R_j}^{\lambda} > \alpha$. It suffices to fix $N$ and prove
\begin{equation}\label{eq:goalSM}
\alpha \lambda\Big(\bigcup_{j=1}^N R_j\Big)^{1/s} \lesssim [\lambda]_{A_p}^{1+1/s}\|f\|_{L^s(\lambda)}.
\end{equation}

Write $R_j = I_j \times J_j$, and reindex the cubes so that $\ell(J_{j+1}) \le \ell(J_j)$, $j = 1, \ldots, N-1$. We use Cordoba--Fefferman algorithm.
Let $s_1 = 1$, and suppose $s_1 < s_2 < \cdots < s_{l-1} < N$ have been chosen. Then $s_{l}$ is defined, if it exists, to be the smallest integer
$j \in (s_{l-1}, N]$ so that
$$
\Big|R_j \cap \bigcup_{i=1}^{l-1} R_{s_i}\Big| < \frac{|R_j|}{2}.
$$
Write $\calJ = \{1, \ldots, N\}$, $\calJ_s = \{s_i\}$ and $\calJ_s^c = \calJ \setminus \calJ_s$.
Notice that for all $j_0 \in \calJ$ we have
\begin{equation}\label{eq:cont}
R_{j_0} \cap \bigcup_{\substack{j \in \calJ_s \\ j < j_0}} R_{j} = \Big[ I_{j_0} \cap \bigcup_{\substack{j \in \calJ_s \\ j < j_0 \\ J_{j_0} \subset J_j}} I_{j} \Big] \times J_{j_0}.
\end{equation}

For $x_2 \in \R^m$ we set $I_j(x_2) = I_j$ if $x_2 \in J_j$, and $I_j(x_2) = \emptyset$ otherwise.
Let $j_0 \in \calJ_s^c$ be arbitrary. Then we have
$$
\Big| R_{j_0} \cap \bigcup_{\substack{j \in \calJ_s \\ j < j_0}} R_{j}\Big| \ge \frac{|R_{j_0}|}{2}.
$$
Using \eqref{eq:cont} we see that
$$
\Big| I_{j_0}(x_2) \cap \bigcup_{j \in \calJ_s} I_{j}(x_2)\Big| \ge \Big| I_{j_0}(x_2) \cap \bigcup_{\substack{j \in \calJ_s \\ j < j_0}} I_{j}(x_2)\Big| \ge \frac{|I_{j_0}(x_2)|}{2}.
$$
Using that for all cubes $I \subset \R^n$ and all subsets $E \subset I$ we have
\begin{equation}\label{eq:stan1}
\frac{\lambda(\cdot, x_2)(E)}{\lambda(\cdot, x_2)(I)} \ge [\lambda]_{A_p}^{-1} \Big( \frac{|E|}{|I|} \Big)^p
\end{equation}
we conclude that
$$
\lambda(\cdot, x_2)\Big(I_{j_0}(x_2) \cap \bigcup_{j \in \calJ_s} I_{j}(x_2)\Big) \ge  c_1 [\lambda]_{A_p}^{-1} \lambda(\cdot, x_2)(I_{j_0}(x_2)), \qquad c_1 := 2^{-p}.
$$
Since $j_0 \in \calJ_s^c$ was arbitrary we get for all $x_2 \in \R^m$ that
$$
\bigcup_{j \in \calJ_s^c} I_j(x_2) \subset \Big\{ M_{\calD^n, \lambda(\cdot, x_2)}\big( 1_{\bigcup_{j \in J_s} I_j(x_2)}\big) \ge c_1[\lambda]_{A_p}^{-1}\Big\}.
$$
Using that $M_{\calD^n, \lambda(\cdot, x_2)} \colon L^1(\lambda(\cdot, x_2)) \to  L^{1,\infty}(\lambda(\cdot, x_2))$ (even with constant $1$) we get
$$
\lambda(\cdot, x_2) \Big( \bigcup_{j \in \calJ} I_j(x_2) \Big) \lesssim [\lambda]_{A_p} \lambda(\cdot, x_2) \Big( \bigcup_{j \in \calJ_s} I_j(x_2) \Big),
$$
which, after integrating over $x_2 \in \R^m$, gives our first key inequality
\begin{equation}\label{eq:app1KEY1}
\lambda\Big( \bigcup_{j \in \calJ} R_j \Big) \lesssim [\lambda]_{A_p} \lambda \Big( \bigcup_{j \in \calJ_s} R_j \Big).
\end{equation}

Let now $j_0 \in \calJ_s$. Then by construction and using \eqref{eq:cont} we get for all $x_2$ that
$$
\Big| I_{j_0}(x_2) \cap \bigcup_{\substack{j \in \calJ_s \\ j < j_0}} I_{j}(x_2)\Big| \le \frac{|I_{j_0}(x_2)|}{2}.
$$
Applying \eqref{eq:stan1} to $E_{j_0}(x_2) := I_{j_0}(x_2) \setminus \bigcup_{\substack{j \in \calJ_s \\ j < j_0}} I_{j}(x_2)$ we have
$$
\lambda(\cdot, x_2)(E_j(x_2)) \ge c_1[\lambda]_{A_p}^{-1}\lambda(\cdot, x_2)(I_j(x_2)), \qquad j \in \calJ_s, \,\, x_2 \in \R^m.
$$
Dualising against $g$ with $\|g\|_{L^s(\lambda(\cdot, x_2))} \le 1$, using the above sparseness property and using
that $M_{\calD^n, \lambda(\cdot, x_2)} \colon L^s( \lambda(\cdot, x_2)) \to L^s(\lambda(\cdot, x_2))$ (with a norm independent of $\lambda$) we get
$$
\Big\| \sum_{j \in \calJ_s} 1_{I_j(x_2)} \Big\|_{L^{s'}(\lambda(\cdot, x_2))}^{s'} \lesssim [\lambda]_{A_p}^{s'} \lambda(\cdot, x_2)\Big( \bigcup_{j \in \calJ_s} I_j(x_2)\Big).
$$
Integrating over $x_2 \in \R^m$ we get our second key inequality
\begin{equation}\label{eq:app1KEY2}
\Big\| \sum_{j \in \calJ_s} 1_{R_j} \Big\|_{L^{s'}(\lambda)} \lesssim [\lambda]_{A_p} \lambda\Big( \bigcup_{j \in \calJ_s} R_j\Big)^{1/s'}.
\end{equation}
Recalling that $\langle |f| \rangle_{R_j}^{\lambda} > \alpha$ and using \eqref{eq:app1KEY1} and \eqref{eq:app1KEY2} we get our claim \eqref{eq:goalSM}:
\begin{equation*}
\begin{split}
\alpha \lambda\Big(\bigcup_{j=1}^N R_j\Big)^{1/s} 
 &\lesssim [\lambda]_{A_p}^{1/s} \alpha\frac{\lambda \big( \bigcup_{j \in \calJ_s} R_j \big)}{\lambda \big( \bigcup_{j \in \calJ_s} R_j \big)^{1/s'}}
\le [\lambda]_{A_p}^{1/s} \frac{\sum_{j \in \calJ_s} \int_{R_j} |f|  \lambda}{\lambda \big( \bigcup_{j \in \calJ_s} R_j \big)^{1/s'}} \\
& \le [\lambda]_{A_p}^{1/s} \frac{\big\| \sum_{j \in \calJ_s} 1_{R_j} \big\|_{L^{s'}(\lambda)}}{{\lambda \big( \bigcup_{j \in \calJ_s} R_j \big)^{1/s'}}} \| f \|_{L^s(\lambda)}
\lesssim [\lambda]_{A_p}^{1+1/s}\| f \|_{L^s(\lambda)}.
\end{split}
\end{equation*}
\end{proof}

\end{appendix}

\subsection*{Funding}
The work of K.L. was supported by the Basque Government BERC
2018-2021 program; and Spanish Ministry of Economy and Competitiveness
MINECO through Juan de la Cierva - Formaci\'on 2015 [FJCI-2015-24547],
BCAM Severo Ochoa excellence accreditation [SEV-2013-0323]
and project [MTM2017-82160-C2-1-P] funded by \linebreak (AEI/FEDER, UE) and
acronym ``HAQMEC''.

The work of H.M. was supported by the Academy of Finland [294840, 306901]; and the University of Helsinki three-year research grant [75160010]. 

The work of E.V. was supported by the Academy of Finland [306901, 307333].

\subsection*{Acknowledgements}
We thank Prof. T. Hyt\"onen for his generous help with Appendix \ref{app1}. We also thank the referees for suggestions that helped to clarify the exposition.
Part of this research was conducted when K. Li was visiting University of Helsinki -- the hospitality of which is acknowledged.
H.M. and E.V. are members of the Finnish Centre of Excellence in Analysis and Dynamics Research.

\end{document}